\documentclass[a4paper,11pt]{article}

\title{On some planar Baumslag-Solitar actions}
\author{Juan Alonso, Nancy Guelman, Crist\'obal Rivas and Juliana Xavier}
\date{}

\usepackage{diagbox}
\usepackage{graphicx,psfrag,subfigure}
\usepackage{fancyhdr}
\usepackage{mathrsfs}
\usepackage{verbatim}
\usepackage[ansinew]{inputenc}
\usepackage{amsmath,amsthm,amssymb,amscd}

\newcommand{\diff}{\mbox{Diff}}
\newcommand{\N}{\mathbb{N}}
\newcommand{\Z}{\mathbb{Z}}

\newcommand{\R}{\mathbb{R}}

\newcommand{\T}{\mathbb{T}}

\DeclareMathOperator{\fix}{Fix}

\DeclareMathOperator{\id}{Id}

\newtheorem{rem}{Remark}

\newtheorem{teo}{Theorem}[section]
\newtheorem{cor}[teo]{Corollary}
\newtheorem{lema}[teo]{Lemma}

\theoremstyle{definition}

\newtheorem{obs}[teo]{Remark}

\theoremstyle{remark}
\makeindex

\begin{document}

\maketitle

\begin{abstract} Let $BS(1,n)= \langle a,b : a b a ^{-1} = b ^n\rangle$ be the solvable Baumslag-Solitar group for $n \geq  2$. We study representations of $BS(1, n)$ on the plane by
orientation preserving homeomorphisms, assuming that $a$ acts as a linear map and $b$ as a map with bounded displacement. We find that the possibilities for a faithful action depend greatly on the Jordan canonical form of the map $h$ defined by the action of $a$. In case $h$ is diagonalizable over $\R$, we shall give examples or prove rigidity theorems depending on the eigenvalues. We also show some rigidity in the cases where $h$ is elliptic or parabolic. Then we give applications to the actions of $BS(1, n)$ by homeomorphisms of the torus.


\end{abstract}

\begin{section}{Introduction}

For $n, m\in \Z \backslash \{0\}$, the Baumslag-Solitar group $BS(m,n)$ is defined by the presentation $BS(m,n)= \langle a,b : a b^m a ^{-1} = b ^n\rangle$. These groups were introduced by Baumslag and Solitar in \cite{bsp} to provide the first examples of two generator non-Hopfian groups with a single defining relation. The groups $BS(1,n)$ for $n \geq  2$ are the simplest examples of infinite non-abelian solvable groups, and also provide examples of distorted elements, which are related to Zimmer's conjecture \cite{zimmer} and play an important role in surface dynamics \cite{fh}.

Recently there has been an interest on understanding the possible dynamics of Baumslag-Solitar group actions on surfaces, the results usually showing restrictions to these types of actions. J. Franks and M. Handel \cite{fh} proved that on a surface $S$ of genus greater than one, any distortion element in the group $\diff^1_0(S,area)$  is a torsion element, 
and therefore there are no faithful representations of $BS(1,n)$ in $\diff^1_0(S,area)$.  N. Guelman and I. Liousse \cite{glp} constructed a smooth $BS(1, n)$ action without finite orbits on $\T^2$ that is not locally rigid, but on the other hand, they proved that there are no minimal faithful actions of $BS(1, n)$ on $\T^2$. J. Alonso, N. Guelman
and J. Xavier \cite{agx} proved that there are no faithful representations of $BS(1, n)$ on surfaces where $a$ acts by a (pseudo-)Anosov homeomorphism with stretch factor $\lambda > n$. In the case of the torus, they showed that there are no faithful actions where $a$ acts by an Anosov map and $b$ by an area-preserving map. This last result also holds without the area preserving hypothesis if the action is $C^1$, as shown by Guelman and Liousse in \cite{gl2}. They also show that any faithful action of $BS(1, n)$ on a closed surface where the action of $b$ is $C^1$ and $a$ acts by a (pseudo-)Anosov map $h$ has a finite orbit contained in the set of singularities of $h$. In the general scope, these works lie in the context of studying the obstructions to the existence of faithful group actions on certain spaces (a survey of these ideas can be found in  \cite{fisher}). Another result in this direction is that of S. Hurtado and J. Xue in \cite{hurtado}, concerning Abelian-by-cyclic groups, which generalize the groups $BS(1,n)$. They found that if such a group acts on the torus $\T^2$ by $C^r$-diffeomorphisms, with $r>2$, non-finite image and containing an Anosov map, then the action is topologically conjugated to a linear action.

In this paper we mainly consider actions of $BS(1,n)$, for any $n\geq 2$,  on the plane by orientation preserving homeomorphisms. To give such an action is the same as giving $f,h:\R^ 2 \to \R^2$ orientation preserving homeomorphisms satisfying 
\begin{equation}\label{BS eq} hfh^{-1} = f ^n 
\end{equation} namely, $f$ and $h$ are the respective images of $b$ and $a$ under the representation. We compensate for the lack of compactness by supposing that the distance from $f$ to the identity map is uniformly bounded, that is, 
that there exists $K>0$ such that $$|f(x)-x|\leq K 	\text{ for all $x\in \R^ 2$}$$ We say that $f$ has {\em bounded displacement} if this condition holds. Note that this is the case when $f$ is the lift of a map of the torus that is isotopic to the identity. The main motivation for this hypothesis is to consider actions on the plane that are lifted from certain actions on the torus, as we shall see in Section \ref{s:toro}. 

In this work we will be interested in the case when $h$ is a linear map. Observe that if we conjugate the action by $A\in GL(2,\R)$ we get another action in our hypotheses, since $AfA^{-1}$ has bounded displacement with constant $||A||K$. Thus we may assume that the associated matrix of $h$ is a canonical form over $\R$. First we focus on the case when $h$ is diagonalizable over $\R$, thus assuming that its associated matrix is 
$$D=\left(\begin{array}{cc} \lambda & 0 \\ 0 & \mu\end{array}\right) \quad \mbox{for }\lambda,\mu\in \R$$ where $\lambda\mu>0$ since $h$ is an orientation preserving homeomorphism. Excluding the cases when either $|\lambda|$ or $|\mu|$ equals $1$, our results can be summarized as follows, where actions are assumed to be faithful:

\begin{center}
    \begin{tabular}{ | c | p{3.2cm} | p{3.2cm} | p{3.2cm} |}
    \hline
    \diagbox{$\lambda$}{$\mu$} & $ 0<|\mu|<1$ & $1<|\mu|\leq n$ & $n<|\mu|$ \\ \hline
    $0<|\lambda|<1$ & No action: \newline Theorem \ref{teo contractivo} \vspace{.5cm}& Product action  & No action: \newline Lemma \ref{lem 3,1}\\ \hline
    
    $1<|\lambda| \leq n$ & Product action \vspace{1cm} & Strange Rotation action and Product action & Preserves horizontal foliation: \newline Corollary \ref{nucero-b}. \newline Product action. \vspace{.5cm} \\ \hline
    
    $n<|\lambda| $ & No action: \newline Lemma \ref{lem 3,1} & Preserves vertical foliation: \newline Corollary \ref{nucero-b}. \newline Product action.\vspace{.5cm} &No action: \newline Lemma \ref{lem 3,3} \\
    \hline
    \end{tabular}
\end{center}

That is, we give examples or prove rigidity theorems depending on the eigenvalues of $h$. The rigidity theorems are either non-existence of faithful actions, or preservation of a foliation of lines, as indicated in the table. The examples show when such rigidity statements do not hold, namely, the product action (Section \ref{sec product action}) is a faithful action that preserves a foliation by straight lines, while the strange rotation action (Section \ref{sec strange rot}) is a faithful action that does not preserve such a foliation. If one of the eigenvalues is $\pm 1$, say $|\lambda|=1$, then  $|\mu|\neq 1$ for $h$ to have infinite order. In this case, the construction of the product action for $1<|\mu|\leq n$ still holds, and we give an example of a faithful action for $|\lambda|=1$ and $|\mu|<1$ in Section \ref{sec ejemplo con vp 1}.  For $|\lambda|=1$ and $|\mu|>n$ there are no faithful actions, by Lemma \ref{lem lambda=1}.

For $h\in SL(2,\R)$ we study the other possible canonical forms. In the elliptic case we show non-existence of faithful actions, in Corollary \ref{noac-elip}. For the parabolic case we show that $f$ fixes every point in the eigenspace of $h$ (Corollary \ref{cor-par}). In particular, if the eigenvalue of $h$ is $1$, then its eigenspace is $\fix(h)$, and hence it is the set of global fixed points of the action. This will allow us to prove that there are no faithful actions of $BS(1,n)$ on the torus where $a$ acts as a Dehn twist.

\end{section}

\begin{section}{Preliminaries} \label{s:prelim}

We shall use some results from surface homeomorphism theory that we state in this section. For an homeomorphism $f$ we denote by $\fix(f)$ its set of fixed points, and by $\Omega(f)$ its non-wandering set, which, we recall, contains the recurrent points of $f$ and in particular its periodic points. The following is the classic Brouwer's Theorem on planar homeomorphisms:

\begin{teo}\label{brou} Let $f:\R^2\to \R^2$ be  an orientation preserving homeomorphism with $\Omega(f)\neq\emptyset$. Then $f$ has a fixed point.
 
\end{teo}

Let $f$ be a surface homeomorphism. A {\em periodic disk chain} for $f$  is a finite set $U_0, U_1, \ldots U_{n-1}$ of pairwise disjoint topological disks such that for all $i=0, \ldots, n-1$ we have $f(U_i)\cap U_i = \emptyset$, and there exists $m_i$ such that $f^{m_i} (U_i)\cap U_{i+1} \neq \emptyset$, where indexes are taken modulo $n$. Below is Franks's \cite{franks} adaptation of Brouwer's theorem to periodic disk chains: 

\begin{teo}\label{dc} Let $f:\R^2\to \R^2$ be  an orientation preserving homeomorphism which possesses a periodic disk chain.  Then $f$ has a fixed point.
 
\end{teo}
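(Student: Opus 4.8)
The plan is to prove the contrapositive: if $f:\R^2\to\R^2$ is an orientation preserving homeomorphism with no fixed point, then $f$ admits no periodic disk chain. The first move is to observe, via Theorem~\ref{brou}, that a fixed-point-free $f$ has no periodic point whatsoever, so $f$ is a \emph{Brouwer homeomorphism} and the full machinery of Brouwer's plane translation theorem becomes available. Recall that a periodic disk chain is a cyclic sequence of pairwise disjoint free open topological disks $D_1,\dots,D_n$ (free meaning $D_i\cap f(D_i)=\emptyset$) together with integers $k_i\ge 1$ such that $f^{k_i}(D_i)\cap D_{i+1}\neq\emptyset$ for each $i$ taken $\bmod\, n$; the goal is to rule out such a configuration when $f$ is fixed-point-free.

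The main external input is Brouwer's \emph{translation arc} lemma: for fixed-point-free $f$, every point $x$ lies on a translation arc $\alpha$ (an arc joining $x$ to $f(x)$ with $\alpha\cap f(\alpha)=\{f(x)\}$), and $L=\bigcup_{k\in\Z}f^k(\alpha)$ is a properly embedded topological line whose translates $f^k(L)$ are pairwise disjoint and escape to infinity, so that $\R^2\setminus L$ has exactly two components, one of which $f$ maps strictly into itself. The consequence I would isolate and reuse is the \emph{free disk lemma}: if $U\subset\R^2$ is open, connected, and $U\cap f(U)=\emptyset$, then $U\cap f^k(U)=\emptyset$ for every $k\neq 0$. (A point of $U\cap f^k(U)$ together with a Brouwer line through $U$ would force $U$ to meet both sides of a translate of $L$, contradicting the escaping-to-infinity property; the case $k<0$ follows from the case $k>0$ applied to $f^{-1}$, for which $U$ is also free.)

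With these tools I would argue by induction on the chain length $n$. For $n=1$ it is immediate: $k_1=1$ is excluded by freeness of $D_1$, and $f^{k_1}(D_1)\cap D_1\neq\emptyset$ with $k_1\ge 2$ contradicts the free disk lemma. For the inductive step, given a chain of length $n\ge 2$, I would fix a Brouwer line $L$ coming from a translation arc that meets $D_1$ and use $L$ and its translates to locate the relevant iterates $\{f^j(D_i)\}$ on definite sides: this should let one either amalgamate two consecutive disks together with the connecting piece of orbit into a single \emph{free} connected open set, producing a genuine disk chain of length $n-1$, or else produce directly a connected free set $W$ with $W\cap f^N(W)\neq\emptyset$ for some $N\ge 1$, contradicting the free disk lemma. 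Iterating the reduction down to $n=1$ forces a contradiction in every case, so a fixed-point-free $f$ has no periodic disk chain; equivalently, the existence of such a chain yields a fixed point.

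The step I expect to be the real obstacle is precisely this amalgamation. Freeness of each $D_i$ controls only the self-term $D_i\cap f(D_i)$ and says nothing about the cross terms $f^j(D_i)\cap D_{i'}$ for $i\neq i'$, so one cannot simply take unions of consecutive disks and their connecting orbit segments and expect to stay free; the global position data carried by the Brouwer lines — tracking which component of $\R^2\setminus f^k(L)$ each disk and each relevant iterate lies in — is exactly what allows the reduction to preserve freeness, and setting up this bookkeeping cleanly (including choosing the arc and the disks whose iterates one merges) is the heart of the proof.
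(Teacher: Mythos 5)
The paper offers no proof of this statement: it is quoted as a known result of Franks (it is Proposition 1.3 of \cite{franks}), so there is no internal argument to compare you against --- you are in effect reproving Franks' theorem from scratch. Your overall skeleton is the standard and correct one: pass to the contrapositive, invoke Brouwer theory for a fixed-point-free orientation preserving planar homeomorphism, isolate the free disk lemma ($U$ open, connected and free implies $f^k(U)\cap U=\emptyset$ for all $k\neq 0$), and induct on the length of the chain. The base case $n=1$ is exactly the free disk lemma and is fine (modulo the fact that the lemma itself requires a genuine translation-arc or index argument, which you are entitled to cite).

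The gap is the inductive step, and you flag it yourself. The reduction from a chain of length $n$ to one of length $n-1$ is precisely the content of Franks' proposition beyond Brouwer's lemma, and ``this should let one either amalgamate two consecutive disks \dots\ or else produce directly a connected free set $W$'' is a statement of intent, not an argument. As you correctly observe, the naive candidate $U_1\cup(\text{connecting orbit segment})\cup U_2$ has no reason to be free, since freeness of each $U_i$ controls only $U_i\cap f(U_i)$ and not the cross terms such as $f(U_2)\cap f^{m_1}(U_1)$; the Brouwer-line bookkeeping that is supposed to repair this is never actually set up, and it is not even clear that a single Brouwer line through $U_1$ suffices (the full orbit of a translation arc need not be a properly embedded line without further work, and locating all the relevant iterates $f^j(U_i)$ relative to its translates is exactly where the difficulty concentrates). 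What you have, then, is a correct reduction of the theorem to its hardest step together with an accurate diagnosis of why that step is hard, but not a proof. To complete it you must either carry out Franks' reduction in detail or cite \cite{franks} for the whole statement, as the paper does.
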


Let $f: A \to A$ be an orientation preserving homeomorphism of the open annulus  $A= S^1\times (0,1)$, and $\tilde f$ a lift of $f$ to the covering space $\tilde A = \R \times (0,1)$. We will say that there is a {\em positively returning disk} for $\tilde f$ if there is an open topological disk $U\subset \tilde A$  such that $\tilde f (U)\cap U= \emptyset$ and $\tilde f ^n (U) \cap (U+k)\neq \emptyset$ for some $n, k > 0$, where $U + k$
denotes the set $\{(x+k, t ): (x, t )\in U\}$.  A {\em negatively
returning disk} is defined similarly but with $k < 0$.
The following result is Theorem (2.1) in \cite{franks}:

\begin{teo}\label{fr} Suppose $f: A\to A$ 
is an orientation preserving homeomor-
phism of the open annulus which is homotopic to the identity,
and satisfies the
following conditions:
\begin{enumerate}
 \item  every point of $A$ is non-wandering,

\item $f$ has at most finitely many fixed points, and

\item there is a lift of $f$ to its universal covering space $\tilde f : \tilde A\to \tilde A$ which
possesses both a positively and a negative returning disk, both of which are lifts of some disks in $A$.
\end{enumerate}
Then $f$ has a fixed point.
 
\end{teo}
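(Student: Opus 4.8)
\emph{Sketch of a proof.} The plan is to argue by contradiction, with Franks' periodic disk chain theorem (Theorem~\ref{dc}) as the engine. Suppose $f$ has no fixed point, let $\tilde f:\tilde A\to\tilde A$ be the lift given by hypothesis~(3), and identify $\tilde A=\R\times(0,1)$ with $\R^{2}$. Any fixed point of $\tilde f$ would project to one of $f$, so $\tilde f$ is fixed-point free; hence, by Theorem~\ref{dc}, $\tilde f$ admits no periodic disk chain, and the whole argument is devoted to producing one. Let $T(x,t)=(x+1,t)$ generate the deck group; since $f$ is homotopic to the identity, $\tilde fT=T\tilde f$. Two observations set the stage. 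First, if $D\subset\tilde A$ is an open disk with $\tilde f(D)\cap D=\emptyset$ and $\tilde f^{m}(D)\cap D\neq\emptyset$ for some $m\geq1$, then $\{D\}$ is itself a periodic disk chain, which is impossible; thus any ``recurrent'' disk disjoint from its image must recur with \emph{nonzero} winding, and it is precisely the positively and negatively returning disks --- whose recurrences wind with opposite signs --- that will have to be combined into a recurrence of winding zero. Second, away from the finitely many fixed points of $f$ (there are in fact none, as we have assumed $f$ fixed-point free) every sufficiently small open disk in $A$ is disjoint from its $f$-image; this is the only place hypothesis~(2) is used, and it is what makes the constructions below possible.

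The next step is to read the returning disks as recurrence of a definite sign. Projecting the positively returning disk $U$ gives a disk $\bar U\subset A$ such that some $f$-orbit leaves and re-enters $\bar U$ after winding a positive number of times net (this is the content of $\tilde f^{\,n}(U)\cap T^{k}U\neq\emptyset$ with $n,k>0$), and symmetrically the negatively returning disk gives a disk $\bar V\subset A$ with an $f$-orbit re-entering it after winding negatively. Using that \emph{every} point of $A$ is non-wandering, and passing as above to small sub-disks disjoint from their $f$-images, I would convert each of these into a genuine \emph{forward} chain of pairwise disjoint small disks, disjoint from their images, that in the cover links a translate of a disk back to itself with positive (respectively negative) net winding. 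Finally, again because every point of $A$ is non-wandering and $A$ is connected, the positive chain and the negative chain can be linked to one another, and back, by further forward chains of small disks of the same type.

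It then remains to concatenate: the positive chain, a linking chain, the negative chain, a linking chain, with the positive and negative chains repeated with suitable multiplicities (and, should the arithmetic require it, padded with still more recurrent pieces furnished by non-wandering) so that all the windings cancel exactly. Having arranged from the outset that all the small disks involved are pairwise disjoint and distinct in $A$ and small enough that their integer translates in the cover are pairwise disjoint, the resulting cyclic sequence lifts to a cyclic sequence of pairwise disjoint open disks in $\tilde A$; on it, each disk is disjoint from its $\tilde f$-image, each meets the next under a positive iterate of $\tilde f$, and --- because the total winding is zero --- the sequence closes up. This is a periodic disk chain for $\tilde f$, contradicting Theorem~\ref{dc}. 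Hence $f$ has a fixed point.

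The heart of the matter, and the step I expect to resist, is the second paragraph: turning the bare non-wandering of every point of $A$ into honest forward chains of disjoint-from-their-image disks that both realise the required windings and can be linked up across the annulus --- it is exactly here that the hypotheses that \emph{every} point is non-wandering and that the fixed point set is finite become indispensable --- together with the bookkeeping that makes the total winding vanish while keeping all the lifted disks pairwise disjoint as the chain wraps around. One can also package the argument through rotation numbers of the lift $\tilde f$: the two returning disks force recurrence with rotation numbers of opposite sign, non-wandering forces the attainable rotation numbers to fill an interval and hence to include $0$, and a recurrence of rotation number $0$ yields, via Theorems~\ref{dc} and~\ref{brou}, a fixed point of $\tilde f$.
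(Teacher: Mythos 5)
First, a point of context: the paper does not prove this statement at all --- it is quoted verbatim as Theorem (2.1) of Franks \cite{franks} and used as a black box, so there is no proof in the paper to compare yours against. That said, your overall strategy (assume $f$, hence the lift $\tilde f$, is fixed point free, and manufacture a periodic disk chain for $\tilde f$ to contradict Theorem \ref{dc}) is in fact the strategy of Franks' original proof, and the translation-equivariant chains $U\to T^kU\to T^{2k}U\to\cdots$ and $V\to T^{-k'}V\to\cdots$ supplied by the positively and negatively returning disks are the right starting point.

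However, the sketch has a genuine gap exactly where you say you expect one, and it is the heart of the theorem rather than bookkeeping. You assert that ``because every point of $A$ is non-wandering and $A$ is connected, the positive chain and the negative chain can be linked to one another, and back, by further forward chains.'' Non-wandering only gives recurrence of each small free disk to itself; it does not by itself give transitivity between two prescribed disks. The missing argument is roughly: declare $x\sim y$ if there are free-disk chains from every free disk around $x$ to every free disk around $y$ and back; non-wandering makes every free disk $W$ satisfy $f^m(W)\cap W\neq\emptyset$ for some $m\geq 1$, so all points of a common free disk are mutually related and each equivalence class is open; the classes partition $A$ and $A$ is connected, so there is exactly one class, whence chains from $\bar U$ to $\bar V$ and back. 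Without this (or an equivalent) step you have no link between the two returning disks and hence no contradiction. Two further steps are stated but not carried out: (i) the winding cancellation requires solving something like $jk-j'k'+lc=0$ in non-negative integers, which does work by a gcd argument because $k,k'>0$, but ``padded with still more recurrent pieces'' is not that argument; and (ii) Theorem \ref{dc} as stated requires the disks of the chain to be pairwise \emph{disjoint} --- distinct integer translates of $U$ (or of $V$) are disjoint precisely because the returning disks are assumed to be lifts of disks in $A$, but translates of $U$ against translates of $V$ and against the linking disks still need a shrinking or chain-shortcutting reduction. Finally, the alternative you propose in the last sentence is circular: the claim that non-wandering forces the attainable rotation numbers of $\tilde f$ to fill an interval containing $0$, and that rotation number $0$ recurrence yields a fixed point, is essentially the content of the theorem you are trying to prove, not a shortcut to it.
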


We say that a homeomorphism is {\it recurrent} if there exists a sequence $n_k$, $k\to \infty$, such that $d(f^{n_k}, \id)\to 0$ uniformly. Next we state a result on sphere homeomorphisms due to Kolev and P\'erou\`eme \cite{kp}: 

\begin{teo}\label{recu}  Let $f:S^2\to S^2$ be a non trivial recurrent orientation-preserving homemorphism.  Then $f$ has exactly two fixed points.
\end{teo}



\end{section}

\begin{section}{Basic computations} \label{s:basic}

Throughout most of the paper we consider a planar action of $BS(1,n)$  for some $n\geq 2$, given by  orientation preserving homeomorphisms $f,h:\R^2\to\R^2$ satisfying the group relation $hfh^{-1}=f^n$. The following is an easy consequence of the group relation.

\begin{lema}\label{bsp} Let $f$ and $h$ be bijections such that $hfh ^{-1} = f ^n$.  Then:
\begin{enumerate}
 \item $h^m f h ^{-m} = f ^{n^m} $ for all $m\geq 1$;
 \item $h^{-1} (\fix (f))\subset \fix (f)$.
\end{enumerate}
\end{lema}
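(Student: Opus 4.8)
The plan is to treat the two items separately; both follow from elementary manipulations of the defining relation $hfh^{-1}=f^n$, using only that conjugation by $h$ is a group automorphism of the subgroup of $\homeo(\R^2)$ generated by $f$ and $h$.

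For item (1) I would argue by induction on $m$. The case $m=1$ is exactly the hypothesis. Assuming $h^m f h^{-m}=f^{n^m}$, I would write $h^{m+1}fh^{-(m+1)} = h\,(h^m f h^{-m})\,h^{-1} = h f^{n^m} h^{-1}$, and then use that $g\mapsto hgh^{-1}$ is a homomorphism to pull the exponent out: $h f^{n^m} h^{-1} = (hfh^{-1})^{n^m} = (f^n)^{n^m} = f^{n^{m+1}}$. This closes the induction.

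For item (2), I would first rewrite the relation as $fh^{-1}=h^{-1}f^n$ (multiply $hfh^{-1}=f^n$ on the left by $h^{-1}$). Now take any $x\in\fix(f)$. Since $f(x)=x$ we also have $f^n(x)=x$, hence $f(h^{-1}(x)) = h^{-1}(f^n(x)) = h^{-1}(x)$, i.e. $h^{-1}(x)\in\fix(f)$. As $x$ was arbitrary, $h^{-1}(\fix(f))\subset\fix(f)$.

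Neither step presents a genuine obstacle; the only point requiring a little care is bookkeeping the order of composition and the placement of inverses (in particular in item (2), so that the inclusion lands in the stated direction rather than its reverse). One could also note that item (2) is the $m$-independent shadow of item (1): from $h^m f h^{-m}=f^{n^m}$ one gets $f h^{-m}=h^{-m}f^{n^m}$, whence $h^{-m}(\fix(f))\subset\fix(f)$ for every $m\ge 1$; but the case $m=1$ already suffices for the applications that follow.
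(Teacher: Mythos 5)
Your proof is correct; the paper itself omits any argument for this lemma (it is stated as "an easy consequence of the group relation"), and your induction for item (1) together with the rewriting $fh^{-1}=h^{-1}f^n$ for item (2) is exactly the standard computation the authors are implicitly relying on.
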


Fix an isotopy $(f_t)_{t\in [0,1]}$ such that $f_0 = \id$ and $f_1 = f$.  For all $x\in \R^ 2$ we denote by $\gamma _x$ the arc $t\mapsto f_t (x), t\in [0,1]$.  More generally, for
$x\in \R^ 2$ and $m\geq 1$ define $\gamma ^m_x = \prod _{i= 0} ^{m-1}
\gamma _{f^i (x)}$, where the product is concatenation of arcs.\\

The next lemmas relate to transverse measured foliations that have some compatibility with the action. Let $\nu$ be a transverse (signed) measure to an oriented foliation of the plane $\R^ 2$, which can be extended to measure any curve with the following properties:  
\begin{itemize}
\item $\nu (\beta  \gamma) = \nu (\beta) + \nu (\gamma)$, and
\item $\nu(\beta) = \nu (\gamma)$ if $\beta$ and $\gamma$ have the same endpoints.
\end{itemize}
We assume $\nu$ satisfies the following further conditions, that relate to the action:
\begin{itemize}
 \item $\nu (h(\gamma))= a \nu (\gamma)$, for some $a\in\R$, $a\neq 0$,
 
 \item the map $x \to \nu (\gamma _x)$ is bounded on $\R ^ 2$.
\end{itemize}

Note that the first item implies that $h$ preserves the foliation, i.e. takes leaves to leaves. There are general consequences in the cases when $|a|>n$ and when $|a|<1$, which shall lead to our rigidity results. 

\begin{lema}  If $|a|>n$, then $\nu (\gamma _x )\equiv 0$.
 
\end{lema}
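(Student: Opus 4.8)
The plan is to exploit property 3 (that $\nu$ depends only on endpoints) together with the group relation $hfh^{-1}=f^n$ in order to compare $\nu$ of the short arc $\gamma_x$ with $\nu$ of a very long orbit arc, and then invoke the uniform bound from property 4 to conclude.

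\smallskip
\noindent\textbf{Step 1 (comparing arcs via the group relation).} Fix $x\in\R^2$ and $k\geq 1$. Since $h^k$ is a homeomorphism, $h^k(\gamma_x)$ is an arc joining $h^k(x)$ to $h^k(f(x))$. By Lemma \ref{bsp}(1) we have $h^k f h^{-k}=f^{n^k}$, hence $h^k(f(x))=f^{n^k}(h^k(x))$. On the other hand, the concatenated arc $\gamma^{n^k}_{h^k(x)}=\prod_{i=0}^{n^k-1}\gamma_{f^i(h^k(x))}$ joins $h^k(x)$ to $f^{n^k}(h^k(x))$ — the same ordered pair of endpoints as $h^k(\gamma_x)$. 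Therefore property 3 gives
$$\nu\bigl(h^k(\gamma_x)\bigr)=\nu\bigl(\gamma^{n^k}_{h^k(x)}\bigr).$$

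\smallskip
\noindent\textbf{Step 2 (the two-sided estimate).} Iterating the first property of $\nu$ yields $\nu(h^k(\gamma_x))=a^k\,\nu(\gamma_x)$. For the right-hand side, additivity (property 2) gives $\nu(\gamma^{n^k}_{h^k(x)})=\sum_{i=0}^{n^k-1}\nu(\gamma_{f^i(h^k(x))})$, so setting $B:=\sup_{z\in\R^2}|\nu(\gamma_z)|$, which is finite by property 4, we obtain $|\nu(\gamma^{n^k}_{h^k(x)})|\leq n^k B$. Combining the two displays,
$$a^k\,|\nu(\gamma_x)|\;\leq\;n^k B,\qquad\text{that is}\qquad |\nu(\gamma_x)|\;\leq\;\Bigl(\tfrac{n}{a}\Bigr)^{k}B.$$

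\smallskip
\noindent\textbf{Step 3 (letting $k\to\infty$).} Since $a>n$, we have $(n/a)^k\to 0$ as $k\to\infty$, while the left-hand side is independent of $k$. Hence $\nu(\gamma_x)=0$, and since $x$ was arbitrary, $\nu(\gamma_x)\equiv 0$.

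\smallskip
The argument is short and essentially forced; the only point requiring care is the endpoint bookkeeping in Step 1, where one must use the conjugation relation (through Lemma \ref{bsp}(1)) to see that $h^k$ carries the one-step arc $\gamma_x$ to an arc with the same endpoints as the length-$n^k$ orbit arc $\gamma^{n^k}_{h^k(x)}$. It is worth noting that all four hypotheses on $\nu$ enter: property 1 produces the factor $a^k$, property 2 produces the linear-in-$n^k$ bound, property 3 identifies the two arcs, and property 4 ensures $B<\infty$.
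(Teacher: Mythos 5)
Your proposal is correct and follows essentially the same argument as the paper: both use the endpoint identification coming from $h^k f h^{-k}=f^{n^k}$, the scaling property to extract the factor $a^k$, additivity plus the uniform bound to get the $n^k B$ estimate, and then let $k\to\infty$. The only (cosmetic) difference is that you apply $h^k$ to the arc based at $x$ and bound $\nu(\gamma_x)$ directly, whereas the paper applies $h^m$ to the arc based at $h^{-m}(x)$ and then observes the resulting bound is uniform in $x$; your bookkeeping with absolute values is in fact slightly more careful, since $\nu$ is a signed measure.
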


\begin{proof}  Note that the arcs $h^ m \gamma _{h^ {-m}(x)}$ and $\gamma ^ {n^m}_x$ have the same endpoints by item 1 in Lemma \ref{bsp}.  Therefore, $$|a|^ m |\nu (\gamma _{h^ {-m}(x)})| = |\nu (h^ m \gamma _{h^ {-m}(x)})| = |\nu (\gamma ^ {n^m}_x)| = \left|\sum _{i=0}^{n^m-1}\nu (\gamma _{f^i(x)})\right|\leq Cn^ m ,$$
where $C$ is the bound for the map $x\to \nu (\gamma_x)$. So,
$$|\nu (\gamma _{h^ {-m}(x)})|\leq \left(\frac{n}{|a|}\right)^ m C,$$ for all $m\geq 1$ and all $x\in \R^ 2$. Since the right term of the last equation is independent of $x$, if $|a|>n$ we have that $\nu (\gamma _x )\equiv 0$.
 
\end{proof}

\begin{cor}\label{nucero} If $\cal F$ is the foliation of $\R^2$ that has $\nu$ as a transverse measure, and $|a|>n$, then each leaf of ${\cal F}$ is $f$-invariant.

 
\end{cor}

\begin{lema}\label{rec}  For all $m\geq 1$, we have $|\nu (\gamma ^ {n^m}_x)|  \leq C |a|^ m$.  In particular, if $|a|<1$, then $\lim_{m\to \infty} \nu (\gamma_x^ {n^m}) = 0$ uniformly on $x$.  
 
\end{lema}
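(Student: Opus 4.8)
The plan is to reuse, essentially verbatim, the identity that drove the proof of the previous lemma, only read it ``from the other side''. That lemma extracted information by bounding $\nu(\gamma_x^{n^m})$ from above by $Cn^m$ via additivity and the uniform bound on $x\mapsto\nu(\gamma_x)$; here I instead want the complementary estimate produced by the scaling property $\nu(h(\gamma))=a\,\nu(\gamma)$ applied directly to $\gamma_x^{n^m}$.

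First I would observe, using item 1 of Lemma \ref{bsp}, that $h^m f h^{-m}=f^{n^m}$; consequently the arc $h^m\gamma_{h^{-m}(x)}$ runs from $h^m(h^{-m}(x))=x$ to $h^m\!\big(f(h^{-m}(x))\big)=f^{n^m}(x)$, which are precisely the endpoints of $\gamma_x^{n^m}$. By the third bullet in the definition of $\nu$ (invariance under replacing an arc by another with the same endpoints), the two arcs have the same $\nu$-measure.

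Then I would chain the remaining properties of $\nu$: by the first bullet, $\nu(h^m\gamma_{h^{-m}(x)})=a^m\,\nu(\gamma_{h^{-m}(x)})$, and by the last bullet $|\nu(\gamma_{h^{-m}(x)})|\le C$, where $C$ is the uniform bound for $x\mapsto\nu(\gamma_x)$. Combining these,
$$\nu(\gamma_x^{n^m})=\nu(h^m\gamma_{h^{-m}(x)})=a^m\,\nu(\gamma_{h^{-m}(x)})\le Ca^m.$$
For the ``in particular'' clause, when $a<1$ the right-hand side $Ca^m$ tends to $0$ as $m\to\infty$ and is independent of $x$, which is exactly uniform convergence $\nu(\gamma_x^{n^m})\to 0$.

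I do not expect a genuine obstacle here; the only points needing a moment's care are the endpoint bookkeeping in the first step, so that the ``same endpoints'' bullet legitimately applies, and keeping track of whether $C$ bounds $\nu(\gamma_x)$ or $|\nu(\gamma_x)|$ — the inequality is stated in the source without absolute values, in line with the convention already used in the previous lemma, so I would follow that same convention.
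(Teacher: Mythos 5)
Your proof is correct and follows the paper's argument exactly: the paper's one-line proof (``As before, $\nu(\gamma^{n^m}_x)=a^m\nu(\gamma_{h^{-m}(x)})\leq Ca^m$'') relies on precisely the same-endpoints observation from Lemma \ref{bsp} and the scaling and boundedness properties of $\nu$ that you spell out. Your added care about endpoint bookkeeping and the absolute-value convention only makes explicit what the paper leaves implicit.
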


\begin{proof} As before, $$|\nu (\gamma ^ {n^m}_x)| = |a|^ m |\nu (\gamma _{h^ {-m}(x)})| \leq C |a|^ m.$$
 
\end{proof}

\end{section}


\begin{section}{Rigidity in the diagonal case} \label{s:diag rigidity}

Let $f,h:\R^2\to\R^2$ be orientation preserving homeomorphisms with $hfh^{-1}=f^n$, and assume that $f$ has bounded displacement and $h$ is the linear map whose associated matrix is
$$D=\left(\begin{array}{cc} \lambda & 0 \\ 0 & \mu\end{array}\right)$$ i.e. $h(x,y)=(\lambda x,\mu y)$. We devote this section to proving the rigidity results announced in the table shown in the Introduction. In the next section we construct examples showing that the hypotheses of these results cannot be relaxed. 
                                                                  
Observe that $dx$ is a transverse measure to the foliation of $\R^2$ by vertical lines, and that it satisfies the conditions given in Section \ref{s:basic} with $a=\lambda$. The same is true for $dy$ and the horizontal foliation, with $a=\mu$. Then Corollary \ref{nucero} gives us that

\begin{cor} \label{nucero-b} $ $
\begin{itemize}
\item If $|\lambda|>n$, the vertical foliation is preserved by $\langle f,h\rangle$.
\item If $|\mu|>n$, the horizontal foliation is preserved by $\langle f,h\rangle$.

\end{itemize}
\end{cor}

Next we turn to non-existence of faithful actions under certain conditions on the eigenvalues, where we actually show something stronger, namely that $f$ must be the identity map.

\begin{lema}  If  $|\lambda|>n$ and $|\mu| > n$, then $f=\id$.
 \label{lem 3,3}
\end{lema}

\begin{proof}   Using Corollary \ref{nucero} with $a =\lambda$ and $\nu=dx$, one obtains that each vertical line  is preserved by $f$. So
each $f$-orbit is contained in a vertical line, and using Corollary \ref{nucero} with $a =\mu$ and $\nu=dy$, one obtains that each horizontal line is also preserved by $f$. The result follows.

\end{proof} 

\begin{lema}  If  $|\lambda|>n$ and $|\mu| <1$, then $f=\id$.
 \label{lem 3,1}
\end{lema}

\begin{proof}  As in the previous lemma, each $f$-orbit is contained in a vertical line. Thus for any $x\in\R^2$, applying Lemma \ref{rec} with $\nu=dy$ shows that $x$ is a recurrent point for the restriction of $f$ to the vertical line containing $x$. So $x$ must be fixed by $f$, since line homeomorphisms do not have recurrent points that are not fixed.
 
\end{proof}

\begin{lema}  If  $|\lambda|=1$ and $|\mu| >n$, then $f=\id$.
 \label{lem lambda=1}
\end{lema}

\begin{proof}
First we assume that $\lambda=1$, $\mu>n$. Using Corollary \ref{nucero} with $a =\mu$ and $\nu=dy$, we get that $f$ preserves each horizontal line. Moreover, since it has bounded displacement, $f$ preserves the orientation of each horizontal line. Let $L=\R\times\{0\}$ be the horizontal axis, and note that $h|_L=\id_L$ since $\lambda=1$. Then we deduce that $f|_L^{n-1}=\id_L$, and since $f|_L$ is an orientation preserving line homeomorphism, we must have $f|_L=\id_L$.

Suppose $f\neq\id$, and let $p=(x_0,y_0)$ be a point with $f(p)\neq p$. We write the restriction of $f$ to the horizontal by $p$ as $$f(x,y_0)=(\phi(x),y_0) $$ where $\phi:\R\to\R$ is an orientation preserving homeomorphism. Using that $f=h^{-k}f^{n^k}h^k$ for $k\geq 0$ we see that $$f(x,\mu^{-k}y_0)=(\phi^{n^k}(x),\mu^{-k}y_0) $$
and since $(x_0,\mu^{-k}y_0)\to_k(x_0,0)$ and $f$ fixes $(x_0,0)$,    we get by continuity that $\phi^{n^k}(x_0)\to_k x_0$. This is absurd, as an orientation preserving line homeomorphism cannot have a recurrent point that is not fixed.

For the case $\lambda=-1$, $\mu<-n$, we consider the action $\langle f,h^2 \rangle$ of $BS(1,n^2)$. Observing that $h^2$ has eigenvalues $1$ and $\mu^2>n^2$, we get that $f=\id$ from the previous case.
\end{proof}

\begin{teo}\label{teo contractivo} If $|\lambda|<1$ and $|\mu| <1$, then $f= \id$.
 
\end{teo}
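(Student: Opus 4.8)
The plan is to upgrade Lemma~\ref{rec} into a recurrence statement and then run a Brouwer-type argument, first on the sphere and then on an annulus around the unique interior fixed point. Since $\lambda<1$ and $\mu<1$, the signed measures $dx$ and $dy$ satisfy the hypotheses of Section~3 with $a=\lambda$ and $a=\mu$ respectively, both $<1$; as $\nu(\gamma_x^{n^m})$ equals the corresponding coordinate of $f^{n^m}(x)-x$ (by the ``same endpoints'' property), Lemma~\ref{rec} gives $\sup_x|f^{n^m}(x)-x|\to 0$. (Equivalently and directly: $f^{n^m}=h^mfh^{-m}$ by Lemma~\ref{bsp}, so $|f^{n^m}(x)-x|=|h^m(f(h^{-m}x)-h^{-m}x)|\le\max(\lambda,\mu)^mK\to 0$.) Thus $f^{n^m}\to\id$ uniformly; in particular every point of $\R^2$ is non-wandering for $f$.

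Next I would pin down the fixed point set. Extend $f$ to $\hat f:S^2\to S^2$ with $\hat f(\infty)=\infty$; bounded displacement makes $\hat f$ an orientation-preserving homeomorphism, and $\hat f^{n^m}\to\id$ uniformly in the spherical metric, so $\hat f$ is recurrent. If $f\neq\id$, then by Theorem~\ref{recu} $\hat f$ has at most two fixed points, so $f$ has at most one in $\R^2$. On the other hand $f$ must have at least one: if it had none, pick any $x_0$ and a small topological disk $U\ni x_0$ with $f(U)\cap U=\emptyset$; non-wandering of $x_0$ yields $m_0\ge 2$ with $f^{m_0}(U)\cap U\neq\emptyset$, so $\{U\}$ is a periodic disk chain and Theorem~\ref{dc} gives a fixed point, a contradiction. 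Hence $\fix(f)$ is a single point $p$, and by Lemma~\ref{bsp}(2) $h^{-1}(p)\in\fix(f)=\{p\}$, so $p$ is fixed by the linear map $h$; since $\lambda,\mu<1$ this forces $p=0$, i.e.\ $\fix(f)=\{0\}$.

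Now pass to the open annulus $A=\R^2\setminus\{0\}$, with universal cover $\widetilde A\cong\R^2$. The restriction $f|_A$ is an orientation-preserving homeomorphism with no fixed points, fixing both ends of $A$ (as $\hat f$ fixes $0$ and $\infty$), hence homotopic to the identity; by the first paragraph every point of $A$ is non-wandering for $f|_A$. If one can exhibit a lift $\tilde f$ of $f|_A$ together with a positively returning disk and a negatively returning disk, each a lift of a disk in $A$, then Theorem~\ref{fr} forces $f|_A$ to have a fixed point — contradicting $\fix(f|_A)=\varnothing$ — and therefore $f=\id$. (Alternatively one could try to build a periodic disk chain for $\tilde f$ inside $\widetilde A$ and apply Theorem~\ref{dc}, since a fixed point of $\tilde f$ projects to one of $f|_A$.)

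The step I expect to be the main obstacle is exactly this production of returning disks of both signs, i.e.\ controlling the winding of $f$-orbits around $0$. Near $\infty$ things are easy: bounded displacement makes $f$ asymptotically the identity, so for the natural lift a far-out disk satisfies $\tilde f(U)\cap U\neq\varnothing$ and, by recurrence, returns to itself under some $f^{n^m}$ with zero winding — useful for non-wandering behaviour upstairs but yielding no winding. Near $0$, bounded displacement is a weak constraint and $f$ could a priori spiral; the role of the recurrence $f^{n^m}\to\id$ should be to forbid the winding from being one-signed (an orbit winding consistently, say, positively would, on iterating the relation $f^{n^m}=h^mfh^{-m}$, accumulate winding incompatible with $f^{n^m}\to\id$). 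Making this precise — turning the heuristic into a genuine positively and a genuine negatively returning disk for a single lift (so that the ``$\tilde f(U)\cap U\neq\varnothing$'' clause also holds) — is the delicate point, and it is where the hypothesis $\lambda,\mu<1$ really enters, through recurrence rather than through leaf-invariance as in the earlier cases.
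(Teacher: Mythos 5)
Your first two steps coincide with the paper's: uniform recurrence $f^{n^m}\to\id$ (the paper's Lemma \ref{esrec}), then $\fix(f)=\{(0,0)\}$ via Theorem \ref{recu}, Theorem \ref{dc} and Lemma \ref{bsp}, then passage to the annulus $A=\R^2\setminus\{(0,0)\}$. But the final step, which you yourself flag as the obstacle, is a genuine gap, and the route you sketch for closing it points the wrong way. You hope to produce a positively \emph{and} a negatively returning disk and invoke Theorem \ref{fr}. The paper's Lemma \ref{br} shows this is hopeless: for a fixed-point-free recurrent $f|_A$, the integers $k_m$ defined by $F^{n^m}(\tilde x)\in\tilde U+k_m$ are all nonzero of a \emph{constant} sign with $|k_m|\to\infty$ --- precisely because $k_m=0$ would give a periodic disk chain and a sign change would give returning disks of both signs. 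So recurrence does not forbid one-signed winding; it coexists with it, and the contradiction must come from somewhere else entirely.

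Where it actually comes from is the Baumslag--Solitar relation lifted to the universal cover of $A$, and this needs two ingredients absent from your proposal. First, one reduces to $\lambda=\mu$ by conjugating with the power map $\Phi$ of Lemma \ref{conj}; the conjugation preserves bounded displacement because $\Phi^{-1}$ is quasi-Lipschitz. Second, compactifying the plane with a circle of directions, both $h$ (now a homothety) and $f$ (by bounded displacement) extend as the identity on the boundary circle, so one can choose lifts $F,H$ that are the identity on $\R\times\{1\}$ and hence satisfy $HFH^{-1}=F^n$ exactly, not merely up to a deck transformation. Consequently $\gamma_x^{n^m}$ and $h^m(\gamma_{h^{-m}x})$ are homotopic in $A$, and since the homothety $h$ preserves angular displacement, $d\theta(\gamma_x^{n^m})=d\theta(\gamma_{h^{-m}x})$ is uniformly bounded --- contradicting the unbounded one-signed winding of Lemma \ref{br}. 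Without the exact lifted relation, and without $\lambda=\mu$ (for general $\lambda\neq\mu$ the linear map $h$ does not preserve $d\theta$), this bound is unavailable, so your argument as it stands does not close.
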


We shall prove this theorem through a series of lemmas. We provide two proofs, one reliant on the work of Oversteegen and Tymchatyn \cite{ot} classifying recurrent homeomorphisms of the plane, and the other on the theorems of Franks that are stated in Section \ref{s:prelim}.


\begin{lema}\label{esrec} $f$ is recurrent. Moreover, we have $d(f^ {n^m}, \id)\to_m 0$ uniformly.
 
\end{lema}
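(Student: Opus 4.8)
The plan is to combine the relation $f^{n^m}=h^{m}fh^{-m}$ provided by the first item of Lemma~\ref{bsp} with the fact that $h=D$ is \emph{linear}, so as to turn the uniform bound $\|f(x)-x\|\le K$ into a bound on $\|f^{n^m}(x)-x\|$ that decays geometrically in $m$. I would fix $x\in\R^2$ and $m\ge 1$, and set $y:=h^{-m}(x)=D^{-m}x$. Since $h^{m}=D^{m}$ is linear, it satisfies $h^{m}(p)-h^{m}(q)=D^{m}(p-q)$, hence
\[
f^{n^m}(x)-x \;=\; h^{m}f(y)-h^{m}(y) \;=\; D^{m}\bigl(f(y)-y\bigr).
\]
This single identity is the only place where the linearity (equivalently, diagonalizability) of the conjugating element is really used.

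It then remains to estimate the right-hand side, which is immediate from bounded displacement. Writing $f(y)-y=(u,v)$, we have $|u|,|v|\le\|f(y)-y\|\le K$ for every $y$, and $D^{m}(u,v)=(\lambda^{m}u,\mu^{m}v)$, so that
\[
\bigl\|f^{n^m}(x)-x\bigr\| \;\le\; \rho^{\,m}\,\|f(y)-y\| \;\le\; \rho^{\,m}K,
\qquad\text{where } \rho:=\max\{|\lambda|,|\mu|\}.
\]
In the situation of Theorem~\ref{teo contractivo} we have $\lambda,\mu<1$, hence $\rho<1$; the right-hand side is independent of $x$ and tends to $0$ as $m\to\infty$, so $d(f^{n^m},\id)\le\rho^{m}K\to 0$, which is precisely the asserted uniform recurrence along the sequence $n^m$.

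Exactly the same conclusion can be reached through the formalism of Section~3, which is perhaps more in keeping with the rest of the paper: apply Lemma~\ref{rec} once to the transverse measure $\nu=dx$ (for which $a=\lambda<1$) and once to $\nu=dy$ (for which $a=\mu<1$). Since $\nu$ depends only on the endpoints of an arc and $\gamma_x^{\,n^m}$ joins $x$ to $f^{n^m}(x)$, the number $\nu(\gamma_x^{\,n^m})$ is exactly one of the two coordinates of $f^{n^m}(x)-x$, and Lemma~\ref{rec} forces both of them to $0$ uniformly in $x$. I do not anticipate a genuine obstacle in this lemma; the only points deserving a word of care are that $h$ must be genuinely linear for the displayed identity to hold, and that in the contracting orientation-preserving regime the hypotheses $\lambda,\mu<1$ are to be read as $0<\lambda,\mu<1$, which is what guarantees $\rho<1$.
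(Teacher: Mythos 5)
Your proof is correct and, in its second paragraph, is exactly the paper's argument: the paper applies Lemma~\ref{rec} to $\nu=dx$ (with $a=\lambda<1$) and $\nu=dy$ (with $a=\mu<1$) and concludes. Your first, more explicit computation $f^{n^m}(x)-x=D^{m}\bigl(f(h^{-m}x)-h^{-m}x\bigr)$ is just an unpacking of the same identity underlying Lemma~\ref{rec}, so the two routes coincide.
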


\begin{proof} By Lemma \ref{rec},  $$dx (\gamma ^ {n^m}_x) \to _m 0,$$ and $$dy (\gamma ^ {n^m}_x) \to _m 0$$ uniformly.  The result follows.
 
\end{proof}

\begin{obs} It follows from \cite{ot} that $f$ is periodic. This, combined with the fact that nontrivial periodic homeomorphisms have unbounded displacement gives a proof of Theorem \ref{teo contractivo}.  
\end{obs}

We include an alternative proof below, which is a nice
application of some theorems of Franks relying on Brouwer theory.

\begin{lema} \label{fix cero} If $f\neq\id$ then $\fix (f) = \fix (h) = \{(0,0)\}$.
 
\end{lema}

\begin{proof}   By Theorem \ref{recu}, if $f\neq \id$, then $f$ has exactly one fixed point $x$ (we are extending $f$ to $S^2$ fixing $\infty$).  As the set $\fix(f)$ is $h^{-1}$- invariant (by Lemma 
\ref{bsp} item 2.), $x$ must be fixed also by $h$, which implies that $x=(0,0)$.  

\end{proof}

By Lemma \ref{fix cero} both $h$ and $f$ map the annulus $A= \R^2\setminus \{(0,0)\}$ onto itself, thus defining an action of $BS(1,n)$ on $A$. Moreover, the restriction $f|_A:A\to A$ is isotopic to the identity, since it preserves orientation and both ends of the annulus. So we can assume that for $x\in A$ the curves $\gamma _x$ are contained in $A$. We will need the following general statement for annulus maps, which is a consequence of Theorem \ref{fr}:

\begin{lema} \label{franks adapt} Let $g$ be a homeomorphism of the open annulus $S^ 1\times (0,1)$ that is isotopic to the identity and such that $\fix (g) = \emptyset$. Suppose that $x\in S^ 1\times (0,1)$ is a recurrent point for $g$, and let $G$ be a lift of $g$ to the universal cover  $\R\times (0,1)$.  If $(n_m)_{m\in \N}$ is any sequence such that $g^ {n_m}(x)\to x$, then for any lift $\tilde x$ of $x$ we have that $$ \mbox{either }\, 
(G^{n_m}(\tilde x))_1\to + \infty \,\,\mbox{ or }\,\, (G^{n_m}(\tilde x))_1\to - \infty$$ where the subindex $1$ stands for the projection onto the first coordinate of $\R\times(0,1)$.
 
\end{lema}

\begin{proof}
Let $p:\R\times (0,1)\to S^1\times (0,1)$ be the covering projection. Consider $U\subset S^1\times (0,1)$ a neighbourhood of $x$ such that $p^ {-1}(U)= \cup_{\alpha} U_{\alpha}$, each $U_{\alpha}$ projecting homeomorphically onto $U$. Taking a smaller neighbourhood if necessary, we may assume that $g(U)\cap U =\emptyset$, since $g$ has no fixed points. Let $\tilde U$ be the connected component of $ p^{-1}(U)$ that contains $\tilde x$.  Since $g^ {n_m}(x)\to x$, for any sufficiently large $m$ there exists $k_m \in \Z$ such that $G^ {n_m} (\tilde x)\in \tilde U + k_m$, in the notation of Section \ref{s:prelim}. Note that it suffices to prove that either $k_m\to_m \infty$ or $k_m\to_m -\infty$.

Since $g$ has no fixed points, the same is true for $G$, and by Theorem \ref{dc} we see that $G$ has no periodic disk chains, which implies that $k_m\neq 0$ for all $m$.  Moreover, the sign of $k_m$ is constant for all $m$, for otherwise $g$ would have a fixed point by Theorem \ref{fr}. 
Applying Theorem \ref{fr} again, this time for the returning disks $\tilde U+k_m$, we see that either $k_{m+1}-k_m > 1$ for all $m$, or $k_{m+1}-k_m < -1$ for all $m$. The result follows.
\end{proof}

Next we show that every $f$-orbit must turn infinitely many times around this annulus. To give a precise statement, we consider $d\theta$ where $\theta$ is the angular coordinate on $A= \R^2\setminus \{(0,0)\}$, which is well defined on $A$, and is a transverse measure to the foliation of $A$ by radial lines. Then we show the following.

\begin{lema}\label{br} Assume that $f\neq\id$. Then for all $x\in A$, we have that either $d\theta (\gamma_x^ {n^ m})\to_m +\infty$ or $d\theta (\gamma_x^ {n^ m})\to_m -\infty$.
 
\end{lema}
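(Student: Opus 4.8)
The plan is to work with the transverse measure $d\theta$ on $A$ (the angular one-form, which is well-defined up to coboundary on arcs, is additive, invariant under linear diagonal $h$ with multiplier $a=1$, and depends only on endpoints modulo $2\pi\Z$), together with a \emph{radial} quantity. The key point is that under $f$ with bounded displacement and $\fix(f)=\{0\}$, no $f$-orbit can accumulate on $0$ or escape to $\infty$ in a way compatible with finite angular winding; in fact I expect the argument to proceed by contradiction via Franks' returning-disk criterion (Theorem \ref{fr}) or the periodic disk chain theorem (Theorem \ref{dc}).

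First I would fix the recurrence: by Lemma \ref{esrec}, $d(f^{n^m},\id)\to 0$ uniformly on $\R^2$, hence on $A$. Next I would suppose, for contradiction, that for some $x\in A$ the sequence $d\theta(\gamma_x^{n^m})$ does \emph{not} tend to $\pm\infty$; since along the concatenation the angular displacements add, and since $d\theta(\gamma_x^{n^m})$ is (up to the fixed coboundary bound $C$) the total angle swept from $x$ to $f^{n^m}(x)$, boundedness of a subsequence means $f^{n^{m_k}}(x)$ returns near $x$ \emph{without} net winding. Combined with uniform recurrence, one builds a small topological disk $U$ around $x$ with $f^{n^{m_k}}(U)$ meeting $U$ (recurrence) but with zero winding, i.e. $f^{n^{m_k}}(U)$ does not meet $U+k$ for $k\neq 0$ in the universal cover $\tilde A=\R\times(0,1)$ of the annulus — this is the contrapositive direction. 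To get a genuine contradiction I would instead use the hypothesis that \emph{every} point is non-wandering (true here since $f$ is recurrent) plus the no-fixed-points-in-$A$ fact: if some orbit had bounded (sub)sequence of winding numbers in one direction while, say, it could not wind the other way either, then $f|_A$ would satisfy the hypotheses of Theorem \ref{fr} only if it had both a positively and a negatively returning disk — the absence of one of these is exactly what "$d\theta(\gamma_x^{n^m})\to +\infty$ or $\to -\infty$" rules in. So I would actually argue: if the conclusion fails, one can produce a disk chain or a returning-disk configuration forcing a fixed point of $f$ in $A$, contradicting $\fix(f)=\{0\}$.

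More concretely, the mechanism I expect to use is: boundedness of $d\theta(\gamma_x^{n^m})$ along a subsequence, together with $d(f^{n^m},\id)\to 0$, yields a small disk $U\subset A$ and an index $m$ with $f^{n^m}(U)\cap U\neq\emptyset$ and with a lift $\tilde f$ of $f$ to $\tilde A$ for which $\tilde f^{n^m}(\tilde U)\cap \tilde U\neq\emptyset$ (zero horizontal translation). Doing this also on the "other side" — using that if neither $+\infty$ nor $-\infty$ holds then actually $d\theta(\gamma_x^{n^m})$ stays bounded (not just along a subsequence), because the increments $d\theta(\gamma_{f^i(x)})$ are uniformly small once $f$ is close to identity — I get that $\tilde f$ has a returning disk with $k=0$, which by an area/index argument (Brouwer, Theorem \ref{brou}, applied to $\tilde f^{n^m}$ restricted near $\tilde U$, or the disk-chain Theorem \ref{dc}) forces a fixed point of $f^{n^m}$, hence of $f$ by Brouwer, inside $A$ — contradiction. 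The sign dichotomy ($+\infty$ versus $-\infty$ rather than "one of each for different $x$") follows because all the $\gamma_x$ are uniformly controlled and the winding is monotone-like along the subsequence $n^m$: the group relation forces $d\theta(\gamma^{n^m}_x)=\mu^m\,d\theta(\gamma_{h^{-m}(x)})$ analog with $a=1$ giving $d\theta(\gamma^{n^m}_x)=d\theta$-value that grows only by accumulation, so the sign cannot flip.

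The main obstacle, I expect, is making precise the passage from "bounded angular displacement along a subsequence" to an honest \emph{returning disk that is a lift of a disk in $A$} with a controlled translation number $k$, and simultaneously ensuring the existence of returning disks of \emph{both} signs (positive and negative) so that Theorem \ref{fr} applies — or, alternatively, setting up the periodic disk chain of Theorem \ref{dc} correctly. This requires carefully choosing the disk $U$ small enough (using uniform recurrence and the bounded-displacement bound $K$) so that its iterates under $f^{n^m}$ stay in a controlled region of $A$, do not encircle $0$, and yet must intersect $U$; and it requires ruling out that the orbit drifts monotonically toward $0$ or $\infty$, which is where the recurrence $d(f^{n^m},\id)\to 0$ together with $\fix f=\{0\}$ is essential. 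Once the returning disks are in place, invoking Theorem \ref{fr} (after checking that every point of $A$ is non-wandering and $f$ has at most one — in fact zero — fixed point in $A$) yields a fixed point of $f$ in $A$, the desired contradiction, so that for every $x\in A$ the angular displacement $d\theta(\gamma_x^{n^m})$ must escape to $+\infty$ or to $-\infty$.
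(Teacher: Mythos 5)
Your toolkit is the right one and matches the paper's: lift $f|_A$ to $\tilde A=\R\times(0,1)$, use the uniform recurrence of Lemma \ref{esrec} to get that $F^{n^m}(\tilde x)$ lands in $\tilde U+k_m$ for some integers $k_m$, and exclude bad behaviour of the $k_m$ via the disk-chain theorem (Theorem \ref{dc}) and Theorem \ref{fr}, using that $f|_A$ is fixed point free. But three of your steps do not go through as written. First, boundedness of $d\theta(\gamma_x^{n^m})$ along a subsequence does \emph{not} give a returning disk with translation $k=0$; it only says the integers $k_m$ stay in a finite set. To extract a contradiction you must pigeonhole a repeated value $k_m=k_{m'}$ ($m<m'$) and then compose: $F^{\,n^{m'}-n^{m}}$ sends $F^{n^m}(\tilde x)\in\tilde U+k_m$ to $F^{n^{m'}}(\tilde x)\in\tilde U+k_m$, so the free disk $\tilde U+k_m$ meets one of its forward images, and Theorem \ref{dc} gives a fixed point of $F$, hence of $f|_A$. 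The paper avoids the contradiction setup altogether and argues directly that $k_m\neq 0$, that the sign of $k_m$ is constant, and that $k_{m+1}\neq k_m$, so $|k_m|\to\infty$ monotonically; either way the composition/pigeonhole step is the missing link in your write-up.

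Second, your claim that the increments $d\theta(\gamma_{f^i(x)})$ are uniformly small ``once $f$ is close to identity'' is false: only $f^{n^m}$ is close to the identity, $f$ itself merely has bounded \emph{Euclidean} displacement, and near the origin a bounded Euclidean displacement permits arbitrarily large angular displacement. (This upgrade is also unnecessary.) Third, the relation $d\theta(\gamma^{n^m}_x)=d\theta(\gamma_{h^{-m}(x)})$ that you invoke for the ``sign dichotomy'' is precisely the inequality used later, in the proof of Theorem \ref{teo contractivo}, to \emph{contradict} this lemma; if it were usable here it would show the winding is bounded, i.e.\ the negation of what you are proving. The correct source of the sign constancy is Theorem \ref{fr}: if $k_m>0$ and $k_{m'}<0$ for some $m,m'$, then $F$ has both a positively and a negatively returning disk which are lifts of disks in $A$, and since every point of $A$ is non-wandering (by recurrence) and $f|_A$ has no fixed points, Theorem \ref{fr} forces a fixed point of $f|_A$, a contradiction. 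With these three repairs your argument becomes the paper's proof.
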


\begin{proof} Note that $f|_A$ does not have fixed points, by Lemma \ref{fix cero}. Consider the universal cover $p:\tilde A\to A $, and take any lift $F:\tilde A\to \tilde A$ of $f|_A$. We identify $A$ with $S^1\times (0,1)$ via homeomorphism, so $\tilde A$ is identified with $\R\times (0,1)$. Note that proving the lemma is equivalent to show that for all $\tilde x\in\tilde A$ we have that either $(F^ {n^m} (\tilde x))_1 \to_m \infty$  or $(F^ {n^m} (\tilde x))_1 \to_m -\infty$. This is obtained from Lemma \ref{franks adapt}, since for $x=p(\tilde x)$ we have $f^ {n^m}(x)\to_m x$ by Lemma \ref{esrec}. 

\end{proof}

The next result will allow us to lift the action on $A$ to the universal cover $\tilde A$. We state it in general, relaxing the conditions on $h$ we assumed for this section. 

\begin{lema} \label{diag lift} Let $\langle f,h\rangle$ be a $BS(1,n)$-action on $\R^2$ where $h$ is linear and $f$ has bounded displacement and fixes $(0,0)$. Then there exist $F,H: \tilde A\to \tilde A$, lifts of $f|_A$ and $h|_A$ respectively, such that $HFH^{-1}= F^n$.
 
\end{lema}

\begin{proof} We consider $D$ the compactification of the plane with the circle of directions.  Since $h$ is linear, it extends to $D$. (For instance, if $h(x,y)=(\lambda x,\lambda y)$, then $h$ extends as the identity on $\partial D$ if $\lambda>0$, or as the antipodal map if $\lambda<0$.) On the other hand, $f$ also extends to $D$ acting as the identity on the boundary, on account of the bounded displacement hypothesis. 

We restrict $f$ and $h$ to $D\setminus \{(0,0)\}$, which is homeomorphic to the annulus with one boundary component $S^1\times(0,1]$. So we may take lifts $F, H: \R \times (0,1]\to \R \times (0,1]$ of these maps, and we can choose $F$ to be the identity on $\R\times \{1\}$.  Now $HFH^{-1}$ and $F^n$ are both lifts of the same map $f^n$ and therefore they must be equal, as they coincide (with the identity) on $\R\times \{1\}$.

\end{proof}

Next we seek to reduce Theorem \ref{teo contractivo} to the case with $\lambda = \mu$. To do this we shall take a suitable conjugation of the action, by the map described in our next result. 

\begin{lema}\label{conj} Suppose that $|\mu|<|\lambda|$.  Let $\beta= \log |\mu|/\log|\lambda|$ and consider the map \begin{displaymath}\Phi(x,y) =
\left\lbrace
\begin{array}{c l}
 (x,y^{\beta}) & \mbox{if }\,\, y\geq 0\\

(x,-|y|^{\beta}) & \mbox{if }\,\, y<0\end{array}\right.
\end{displaymath} Then $f_1= \Phi^{-1} f \Phi$ has bounded displacement.

\end{lema}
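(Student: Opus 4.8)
The goal is to show that the "rescaling" map $\Phi$, which compresses or stretches the $y$-axis by the exponent $\beta = \log\mu/\log\lambda$, conjugates $f$ into a homeomorphism $f_1$ that still has uniformly bounded displacement. The point of introducing $\Phi$ is that $\Phi$ intertwines the diagonal matrix $D = \mathrm{diag}(\lambda,\mu)$ with the scalar matrix $\lambda\cdot\mathrm{Id}$: one checks directly that $\Phi^{-1}D\Phi$ acts as $x\mapsto\lambda x$ on the first coordinate and, since $(\mu y)^\beta = \mu^\beta y^\beta = \lambda y^\beta$ for $y\ge 0$ (and similarly for $y<0$ by the odd symmetry built into $\Phi$), as $y^\beta\mapsto\lambda y^\beta$ on the second coordinate. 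So after conjugating, $h_1 = \Phi^{-1}h\Phi = \lambda\cdot\mathrm{Id}$ and the $\mathrm{BS}$-relation $h_1 f_1 h_1^{-1} = f_1^n$ persists; the only thing that needs an argument is the bounded-displacement claim for $f_1$.

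First I would reduce the displacement estimate to a pointwise bound. Write $f_1(x,y) - (x,y) = \Phi^{-1}\big(f(\Phi(x,y))\big) - (x,y)$. The first coordinate of $f_1(x,y)$ is just the first coordinate of $f(\Phi(x,y))$, and since $\Phi$ does not move the first coordinate and $f$ has displacement bounded by $K$, the horizontal displacement of $f_1$ is bounded by $K$ with no work at all. So everything comes down to the vertical coordinate: if $\Phi(x,y) = (x,Y)$ with $Y = \mathrm{sgn}(y)|y|^\beta$, and $f(x,Y) = (x', Y')$ with $|Y' - Y|\le K$, I must bound $|\,\Phi^{-1}(x',Y')_2 - y\,| = \big|\,\mathrm{sgn}(Y')|Y'|^{1/\beta} - \mathrm{sgn}(Y)|Y|^{1/\beta}\,\big|$ uniformly in $(x,y)$.

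The core of the proof is therefore an elementary one-variable estimate: the function $g(t) = \mathrm{sgn}(t)|t|^{1/\beta}$ satisfies $|g(t+s) - g(t)|\le C(\beta)$ whenever $|s|\le K$ — provided $1/\beta\le 1$, i.e. $\beta\ge 1$, which holds here since $\mu<\lambda$ forces $\beta\le 1$ only when $\lambda>1$; one must be slightly careful about which case of the table we are in, but in the relevant rigidity regime the hypothesis $\mu<\lambda$ together with the ambient assumptions gives $\beta\in(0,1]$, hence exponent $1/\beta\ge 1$. Wait — that is exactly the direction that is \emph{not} automatically Hölder. So the honest statement is: $g$ has bounded increments over bounded intervals precisely because its derivative $g'(t) = \tfrac1\beta|t|^{1/\beta - 1}$ blows up only near $t=0$, and near $t=0$ the function itself is bounded, so a bounded increment of the argument produces a bounded increment of $g$; away from $t=0$, $g'$ is bounded on any set where $|t|$ is bounded below, but here $|t|$ is \emph{not} bounded above, so one instead uses that for large $|t|$ and $|s|\le K$ the increment $|g(t+s)-g(t)|$ is controlled by $\sup_{|\tau-t|\le K}|g'(\tau)| \cdot K$, and this supremum \emph{decreases} as $|t|\to\infty$ since $1/\beta - 1\le 0$. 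Splitting $\R$ into $|t|\le K+1$ (where $g$ is bounded outright) and $|t|\ge K+1$ (where the mean value theorem plus monotonicity of $g'$ in $|t|$ gives a uniform bound) finishes it.

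The main obstacle, then, is not conceptual but is getting this case-split clean and making sure the exponent $1/\beta$ really is $\ge 1$ in the situation where the lemma is invoked (if instead $1/\beta>1$ failed one would simply swap the roles of $\lambda$ and $\mu$, which is why the hypothesis $\mu<\lambda$ is stated). I would organize the write-up as: (i) verify $\Phi h\Phi^{-1} = \lambda\,\mathrm{Id}$ and that the $\mathrm{BS}$-relation is preserved; (ii) observe the horizontal displacement of $f_1$ is trivially $\le K$; (iii) prove the scalar lemma that $t\mapsto\mathrm{sgn}(t)|t|^{1/\beta}$ has uniformly bounded increments over intervals of length $\le K$, by the two-region argument above; (iv) conclude. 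Step (iii) is where all the actual content lies.
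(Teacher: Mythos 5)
Your reduction to the one--variable estimate for $g(t)=\mathrm{sgn}(t)|t|^{1/\beta}$ is exactly the content of the paper's proof, which phrases it as: $\Phi^{-1}$ is quasi-Lipschitz, $|\Phi^{-1}(p)-\Phi^{-1}(q)|\leq |p-q|+C$, whence $|f_1(p)-p|\leq |f\Phi(p)-\Phi(p)|+C\leq K+C$. So the route is the same. The problem is that your determination of the exponent is self-contradictory, and the whole lemma hinges on getting it right. You first (correctly) argue that $\beta\geq 1$ in the regime at hand, then write ``$\beta\in(0,1]$, hence exponent $1/\beta\geq 1$'', and base your ``this is not automatically H\"older'' worry on that false premise --- while your decisive step, the monotone decay of $g'$ at infinity, silently uses $1/\beta-1\leq 0$, i.e.\ the opposite. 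The correct statement is: the lemma is invoked inside the proof of Theorem \ref{teo contractivo}, where $\mu<\lambda<1$, so both logarithms are negative and $\beta=\log\mu/\log\lambda=|\log\mu|/|\log\lambda|>1$, hence $1/\beta\in(0,1)$. This is not cosmetic: if $1/\beta$ were genuinely greater than $1$, your two-region argument would fail, since $\sup_{|\tau-t|\leq K}|g'(\tau)|$ grows without bound as $|t|\to\infty$ and the increments $|g(t+K)-g(t)|$ are then unbounded; the lemma would simply be false for a convex power.

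Once $1/\beta\in(0,1)$ is in hand, your mean-value/two-region argument does work, but it collapses to one line: for $\alpha\in(0,1]$ one has $|u+v|^{\alpha}\leq |u|^{\alpha}+|v|^{\alpha}$, which for the odd extension $g$ gives $|g(t+s)-g(t)|\leq 2|s|^{\alpha}\leq 2K^{\alpha}$ whenever $|s|\leq K$; this is precisely the quasi-Lipschitz bound the paper asserts for $\Phi^{-1}$. Your step (ii), that the horizontal displacement is unchanged, is fine, and your step (i), the intertwining of $h$ with $\lambda\cdot\mathrm{Id}$, is correct but not needed for the lemma itself --- it belongs to the corollary that follows it.
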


\begin{proof}  Notice that since $|\mu|<|\lambda|<1$ we get that  $0<1/\beta<1$. From this we can obtain that $\Phi^{-1}$ is quasi-Lipschitz, namely that 
$$|\Phi^{-1}(p) - \Phi^{-1}(q)|\leq |p-q|+C. $$ 
Thus we have 
$$ |f_1(p)-p| = |\Phi^{-1} f \Phi(p) - \Phi^{-1} \Phi(p)|<|f\Phi(p)-\Phi(p)| + C \leq K+C $$ since $|f(q)-q|\leq K$ for all $q$.
  
\end{proof}

\begin{cor} \label{mu=lambda} If Theorem \ref{teo contractivo} holds for $\lambda=\mu$, then it also holds for  $\lambda\neq\mu$.
 
\end{cor}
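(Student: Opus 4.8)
The plan is to transport the hypothesis through the conjugacy $\Phi$ produced by Lemma~\ref{conj}. If $\lambda=\mu$ there is nothing to prove; otherwise, interchanging the two coordinates if necessary --- a conjugacy, so it keeps $hfh^{-1}=f^n$, the orientation-preserving character of $f$, and the diagonal form of $h$ --- I may assume $\mu<\lambda$, with $\lambda,\mu>0$ as in Lemma~\ref{conj}. Then $\beta=\log\mu/\log\lambda>1$, and $\Phi$ is a genuine orientation-preserving homeomorphism of $\R^2$: the identity in the first coordinate, and an odd, strictly increasing power map in the second. Set $f_1=\Phi^{-1}f\Phi$ and $h_1=\Phi^{-1}h\Phi$.

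The first thing to verify is that $h_1$ is again a linear diagonal transformation --- in fact the homothety $\lambda\cdot\id$. For $y\geq 0$ one computes $\Phi(x,y)=(x,y^{\beta})$, then $h(x,y^{\beta})=(\lambda x,\mu y^{\beta})$, then $\Phi^{-1}(\lambda x,\mu y^{\beta})=(\lambda x,\mu^{1/\beta}y)$; and $\mu^{1/\beta}=\mu^{\log\lambda/\log\mu}=\lambda$, so $h_1(x,y)=(\lambda x,\lambda y)$, the case $y<0$ being identical by the odd symmetry of $\Phi$. Hence $h_1$ is diagonal with both eigenvalues equal to $\lambda<1$.

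It then remains only to collect the other hypotheses of Theorem~\ref{teo contractivo} for the pair $(f_1,h_1)$. Conjugacy preserves the group relation, $h_1 f_1 h_1^{-1}=\Phi^{-1}(hfh^{-1})\Phi=\Phi^{-1}f^n\Phi=f_1^n$; the map $f_1$ is an orientation-preserving homeomorphism of $\R^2$, being conjugate to one; and $f_1$ has bounded displacement by Lemma~\ref{conj}. So Theorem~\ref{teo contractivo} in the already-treated equal-eigenvalue case applies to $(f_1,h_1)$ and yields $f_1=\id$, whence $f=\Phi f_1\Phi^{-1}=\id$.

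The only step that is not purely formal is the computation showing that conjugating the linear map $h$ by the nonlinear map $\Phi$ again produces a linear map; this is precisely where the choice $\beta=\log\mu/\log\lambda$ is used, since it is tailored so that $\mu^{1/\beta}=\lambda$ --- that is, so that $\Phi$ renormalizes the weaker contraction rate into the stronger one. Everything else (bounded displacement, the Baumslag--Solitar relation, and the homeomorphism and orientation properties) is inherited automatically under conjugacy, so no additional argument is required.
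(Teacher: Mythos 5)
Your proof is correct and takes essentially the same route as the paper's: conjugate by the map $\Phi$ of Lemma~\ref{conj} to reduce to the equal-eigenvalue (homothety) case, invoking that lemma for the bounded displacement of $\Phi^{-1}f\Phi$. The explicit verification that $\Phi^{-1}h\Phi$ is the homothety $(x,y)\mapsto(\lambda x,\lambda y)$, via $\mu^{1/\beta}=\lambda$, is exactly the computation the paper declares ``straightforward.''
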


\begin{proof} We suppose without loss of generality that  $|\mu|<|\lambda|$. Then we consider the map $\Phi$ defined in Lemma \ref{conj}, which is an homeomorphism, and a direct computation shows that $$\Phi^{-1} h \Phi(x,y)=(\lambda x,\lambda y) $$ Also, $\Phi^{-1} f \Phi$ has bounded displacement by Lemma \ref{conj}. So by hypothesis we get that $\Phi^{-1} f \Phi = \id$, which implies that $f= \id$.
 
\end{proof}

The assumption that $\lambda=\mu$ yields that $d\theta(h\gamma)=d\theta(\gamma)$ for any curve $\gamma$ in $A$. 
We will also need to control the map $x\to d\theta(\gamma_x)$ for $x\in A$. Note, however, that bounded displacement for $f$ does not imply that this map is bounded, because of the singularity of $d\theta$ at the origin. For our proof of Theorem \ref{teo contractivo} it will suffice to bound it far from the origin.

\begin{lema} \label{theta bound} For every $r>0$ there is $C_r\geq 0$ such that $|d\theta(\gamma_x)|\leq C_r$ for all $x\in \R^2\setminus B((0,0),r)$.
\end{lema}

\begin{proof} Let $D$ be the compactification of $\R^2$ used in the proof of Lemma \ref{diag lift}, and recall that $f$ extends to $D$ as the identity on $\partial D$. Thus the map $x\to d\theta(\gamma_x)$ extends continuously to $D\setminus\{(0,0)\}$ by setting it to $0$ on $\partial D$. The result then follows from the compactness of $D\setminus B((0,0),r)$.

\end{proof}

We are now ready to give our second proof of Theorem \ref{teo contractivo}:

\begin{proof} By Corollary \ref{mu=lambda}, we assume that $\lambda=\mu$.  Applying Lemma \ref{bsp} (item 1) to the lifted action given by Lemma \ref{diag lift}, we deduce that for all $m\geq 1$ and $x\in A$ the curves $\gamma^{n^m}_x$ and $h^ m\gamma _{h^{-m}(x)}$ are homotopic in $A$ with fixed endpoints. Then we have 
 $$d\theta (\gamma^{n^m}_x) = d\theta(h^ m\gamma _{h^{-m}(x)}) = d\theta
(\gamma _{h^{-m}(x)})$$ recalling that $d\theta$ is $h$-invariant, because we assumed $\lambda=\mu$. Since $|\lambda|<1$, we see that for $m$ large enough $h^{-m}(x)$ is outside some fixed ball centered at $(0,0)$. So by Lemma \ref{theta bound} we have that $|d\theta (\gamma^{n^m}_x)|=|d\theta
(\gamma _{h^{-m}x})|\leq C_r$ for some $C_r\geq 0$ and all $m\geq 1$. This contradicts Lemma \ref{br} unless $f=\id$.
 
\end{proof}

\end{section}

\begin{section}{Examples for the diagonal case}\label{sec examples}

As we have seen in the previous section, if neither $|\mu|$ nor $|\lambda|$ belong to the interval $[1,n]$, then there is no faithful
action of $BS(1,n)$ by planar orientation preserving homeomorphisms with $a$ acting by $h(x,y)=(\mu x, \lambda y)$. In this section we construct examples of such actions when $1<|\lambda| \leq n$ (the case 
where $1<|\mu| \leq n$ is symmetric). The first family of examples, the product actions, work for any $\mu\neq 0$ and preserve the horizontal foliation. The second construction needs both $|\lambda|$ and $|\mu|$ in $(1,n]$, and gives examples that do not preserve a foliation by lines. We also produce an example with $|\lambda|=1$ and $|\mu|<1$. 

\subsection{Product actions}
\label{sec product action}

Recall that $BS(1,n)=\langle a,b\mid aba^{-1}=b^n\rangle$. In general, if $\varphi:BS(1,n)\to Homeo_+(\R)$ is a faithful action on the real line, then we can obtain a faithful action of $BS(1,n)$ on the plane by defining, for $g\in BS(1,n)$, $$\psi(g)(x,y)=(\varphi'(g)(x),\varphi(g)(y)),$$
where $\varphi'$ is any (not necessarily faithful) $BS(1,n)$-action on the line. This is the {\it product action} of $\varphi$ and $\varphi'$. Clearly, $\psi(b)$ above has bounded displacement if and only if $\varphi(b)$ and $\varphi'(b)$ have bounded displacement.
In order to obtain an action on the plane in which $\psi(a)$ is linear, we need to restrict our attention to actions on the line where $\varphi(a)$ and $\varphi'(a)$ are linear maps. That is, $\varphi(a)(x)=\lambda x$ and $\varphi'(a)(x)=\mu x$, for some $\lambda,\mu\in\R$.

When $\lambda=n$, then the affine action of $BS(1,n)$ on the line is a faithful action in which the map $\varphi(b)(x)=x+1$ has bounded displacement. Hence, the maps $h(x,y) = (\mu x, ny)$ and $f(x,y) = (x, y+1)$ provide an example of a a faithful planar $BS(1,n)$-action. The same holds for $f(x,y)=(x,y+c(x))$ for any continuous and bounded function $c$ that is not identically zero.

If $1<\lambda<n$, one may try to conjugate the affine action above by a homeomorphism of the line of the form $c: x\mapsto x^\alpha$, so that $c\circ\varphi(a)\circ c^{-1}(x)=n x$. The problem is that then the corresponding $\varphi(b)$  does not have bounded displacement (this is easy, and we leave it to the reader). So in this case we need to consider a different action on the line. According to \cite{rivas jgt}, up to semi-conjugacy, there is only one other candidate, which we now describe.

Let $\varphi(a)(x)=\lambda x$, initially assuming only that $\lambda>0$. For $k\in \Z$ define the {\it fundamental domains} $D_k=\{x\in \R\mid \lambda^{k}\leq x<\lambda^{k+1}\}$, and notice that $\R^+=(0,+\infty)=\bigcup_k D_k$. Let $\phi:\R\to Homeo_+(D_0)$ be a
continuous homomorphic embedding, i.e. a continuous flow on $D_0$. For $x\in D_0$ we define $\varphi(b)(x)=\phi(1)(x)$, and for $x\in D_k$ we define $\varphi(b)(x)=\varphi(a^k)\circ \phi(1/n^k)\circ \varphi(a^{-k})(x)$. This ensures that $\varphi$ extends to a representation of $BS(1,n)$ into $Homeo_+(\R^+)$, that can be further extended to the whole line by reflection, with $0$ as a global fixed point. This action is in fact faithful, see for instance \cite{rivas jgt}. The construction also works for $\lambda<0$ with minor changes: using $|\lambda|$ in the definition of $D_0$, and defining $D_k=\varphi(a^k)D_0$. (In this case $\bigcup_kD_k$ is a disconnected union of intervals, instead of $\R^+$.)

Now we assume that $1<|\lambda|\leq n$, and we want to see when $\varphi(b)$ has bounded displacement. Assume for a while that $\phi$ satisfies  \begin{enumerate}
\item[(\dag)] There is $K>0$ such that $$m|\,  \phi(1/m)(x)-x|\leq K$$ for all $x\in D_0$ and all $m\in \N$.
\end{enumerate}
Under this assumption, we show that $\varphi(b)$ has bounded displacement. Indeed, take $x\in D_k$ with $k\geq 0$, noting that the case for $k< 0$ is obtained by compactness, since $D_k\subset [-1,1]$, and the case for $-x\in D_k$ is analogous. Defining $x_0=\varphi(a^{-k})(x)=\lambda^{-k}x\in D_0$, we have 
 \begin{eqnarray*}
|\varphi(b)(x)-x|
&=&  |\varphi(a^k)\circ \phi(1/n^k)\circ\varphi(a^{-k})(x)-x|\\
&=& |\varphi(a^k)\circ \phi(1/n^k) (x_0)-\varphi(a^k)(x_0)|\\
&=& |\lambda|^k|\phi(1/n^k)(x_0)-x_0|\\
&\leq& n^k|\phi(1/n^k)(x_0)-x_0|\leq K,
\end{eqnarray*}
Then, to produce a planar $BS(1,n)$-action in our hypotheses, it is enough to consider the maps
$$h(x,y)=(\mu x, \lambda y)\, ,\quad  f(x,y)=(x,\varphi(b)(y)).$$

To finish the construction, we only need to exhibit an $\R$-action on $D_0$ satisfying $(\dag)$.  This can be done by taking $\phi$ as the flow of a $C^1$ vector field $X$ on $\bar D_0 = [1,\lambda]$ with $X(1)=X(\lambda)=0$. Let $K=\max |X|$. Then 
\begin{eqnarray*}
m|\phi(1/m)(x)-x|
&=& m \left|\int_0^{1/m} X(\phi(s)(x)) ds\right| \\
&\leq& m \int_{0}^{1/m} |X(\phi(s)(x))|ds \leq mK\frac{1}{m} = K.
\end{eqnarray*}

Notice that we can choose $X$ to be non-zero on the interior of $D_0$, so that the flow $\phi$ is conjugate to a translation on $D_0$.



\subsection{Strange rotations} \label{sec strange rot}

In this section we assume $|\lambda|, |\mu|\in(1,n]$, and by symmetry we may take $|\mu|\leq\|\lambda|$. First we consider the case where $\lambda=\mu>0$, so we have $h(x,y) = (\lambda x,\lambda y)$. Let  $\alpha = \log_{\lambda}n = \log n/\log \lambda$, and define $f$ by the following formula in polar coordinates:

$$f(r,\theta)= (r,\theta + r^{-\alpha}). $$

Observe that this map is an orientation preserving homeomorphism, though it clearly is not differentiable at the origin. To show it has bounded displacement, recall that in polar
coordinates we have $\mbox{dist}((r,\theta_1),(r,\theta_2))\leq r |\theta_1-\theta_2|$, where $\mbox{dist}$ stands for the Euclidean distance. Then we get
\begin{equation} \label{rot bound}
|f(p)-p| \leq r\cdot r^{-\alpha} = r^{1-\alpha}
\end{equation}  
where $p$ is the point with polar coordinates $(r,\theta)$. Since $1<\lambda\leq n$ we have $\alpha\geq 1$, so Equation \eqref{rot bound} implies that when $r\geq 1$ we have $|f(p)-p|\leq 1$. When $r\leq 1$ we have $|f(p)-p|<2$, for $f$ preserves the unit disk.

It is also easy to verify the Baumslag-Solitar relation in polar coordinates, noting that $h(r,\theta)=(\lambda r,\theta)$:
\begin{eqnarray*}
hfh^{-1}(r,\theta)
&=& hf(\lambda^{-1}r,\theta)   \\
&=& h(\lambda^{-1}r,\theta+(\lambda^{-1}r)^{-\alpha})  \\
&=&  (r,\theta+\lambda^{\alpha}r^{-\alpha})\\
&=& (r,\theta+nr^{-\alpha})=f^n(r,\theta)
\end{eqnarray*}
recalling that $\lambda^{\alpha}=n$ by definition of $\alpha$.

This example follows the spirit of the construction in \S \ref{sec product action}, where the fundamental domains would be $D_k:=\{(x,y)\in \R^2\mid \lambda^{k}\leq |(x,y)|<\lambda^{k+1}\}$. Still, in this case we obtain a genuinely two
dimensional action. It is faithful by the same argument used in \cite{rivas jgt} for the action on the line, noting that the action of $f$ on $D_0$ has free orbits. ($r^{-\alpha}$ is irrational for many $r\in[1,\lambda)$.)

The same map $f$ works also for $\lambda=\mu<0$, taking $\alpha=\log n/\log |\lambda|$. This time we have $h(r,\theta)=(|\lambda|r,\theta+\pi)$, and the same computation follows.

Now we adjust this example to the case when $|\mu|<|\lambda|$, both in the interval $(1,n]$. Let $\langle f,h \rangle$ be the action just constructed, i.e. $h(x,y)=(\lambda x,\lambda y)$ and $f$ as above, and let $h_1(x,y) = (\lambda x,\mu y)$. Consider $\beta=\log_{|\lambda|} |\mu| = \log |\mu|/\log|\lambda|$, and the map $\Phi(x,y) = (x,\mbox{sign}(y)|y|^{\beta})$ which was introduced in Lemma \ref{conj}. It is straightforward to compute that $h_1 = \Phi h \Phi^{-1}$, so we will just conjugate our previous example by $\Phi$. It remains to check that $f_1= \Phi f \Phi^{-1}$ has bounded displacement. Notice that $0<\beta<1$, since $1<|\mu|<|\lambda|$, thus the same argument used for Lemma \ref{conj} gives us the desired result. (This is the reverse implication to the one in Lemma \ref{conj}, where we had $|\mu|<|\lambda|<1$ and thus $\beta>1$.)


 

\subsection{Examples with eigenvalue $\pm 1$} \label{sec ejemplo con vp 1}

Let us assume that $\lambda=1$ and $0<\mu<1$, so we have $h(x,y)=(x,\mu y)$. Fix some $K>0$ and consider the sets $$D_k = [-K,K]\times [\mu^{k+1},\mu^{k})$$ for $k\in\Z$. Note that $D_k=h^kD_0$, that they are disjoint and that $\bigcup_k D_k = [-K,K]\times \R^+$, i.e. they work as the fundamental domains used for the previous examples. This construction will follow the same ideas. Let $\phi$ be a continuous flow supported on the interior of $D_0$ that preserves each horizontal line, e.g. the flow of an horizontal vector field supported on $D_0$. We can assume that $\phi(s)\to\id$ uniformly as $s\to 0$, by the same argument used for $(\dag)$ in \S \ref{sec product action}. Then we define $f(x)=\phi(1)(x)$ for $x\in D_0$, and $f(x)=h^k\circ\phi(1/n^k)\circ h^{-k}(x)$ for $x\in D_k$, which defines an action of $BS(1,n)$ by homeomorphisms of $[-K,K]\times \R^+$. This action is faithful by the argument in \cite{rivas jgt}. 

We extend $f$ to the plane by setting $f(x)=x$ for $x\notin [-K,K]\times \R^+ $. Continuity at the points in $\{\pm K\}\times\R^+$ is clear, so we must check it at the points in $[-K,K]\times\{0\}$, where the condition that $\mu<1$ comes in. Indeed, since $\mu<1$ we see that a small neighbourhood of $[-K,K]\times\{0\}$ only meets $D_k$ for $k>0$ large enough. On the other hand we note that 
\begin{equation} \label{ej 1}
\sup\{|f(x)-x|:x\in D_k\} = \sup \{ |\phi(1/n^k)(y)-y|:y\in D_0 \}
\end{equation}
since $\phi$ is horizontal and $h$ preserves the first coordinate. Hence $$ \sup\{|f(x)-x|:x\in D_k\}\to_k 0$$ as $k\to+\infty$, showing continuity. 

Note that $f$ is clearly bijective, and the same argument above proves continuity for $f^{-1}$. Equation \eqref{ej 1} also shows that $f$ has bounded displacement, in fact, this construction gives $|f(x)-x|<2K$ for all $x\in\R^2$. A similar construction can be done for $\lambda=-1$ and $-1<\mu<0$.

\end{section}

\begin{section}{Elliptic case}  

Throughout this section we let $f,h:\R^2\to\R^2$ be orientation preserving homeomorphisms with $hfh^{-1}=f^n$, and assume that $f$ has bounded displacement and $h$ is the linear map whose associated matrix is
$$R=\left(\begin{array}{cc} \cos(\theta)& \sin(\theta)\\ -\sin(\theta)& \cos(\theta)\end{array}\right)$$ where $\theta\in [0,2\pi]$. We assume that $\theta/2\pi$ is irrational, so that $h$ has infinite order. 
 In this section we prove:

\begin{teo}\label{noac} In the hypotheses above, we have $f=\id$. 
 
\end{teo}

Which implies, by conjugation by a linear map, our rigidity result in the elliptic case:  

\begin{cor} \label{noac-elip} There is no faithful $BS(1,n)$-action by planar orientation preserving homeomorphisms where the action of $b$ has bounded displacement and $a$ acts by an elliptic linear map.
\end{cor}

Next we develop the lemmas needed to prove Theorem \ref{noac}. For $r>0$, we let $S_r$ denote the circle centered at the origin
with radius $r$.

\begin{lema}\label{esf} If $x\in \fix(f)$, then $S_{|x|}\subset \fix (f)$.
 
\end{lema}

\begin{proof} If $x\in \fix(f)$, we have by Lemma \ref{bsp} (item 2), that $h^{-m}(x)\subset \fix (f)$ for all $m\geq 0$.  Since $h$ is an irrational rotation, the $h$ - backward orbit of $x$ is dense 
in  $S_{|x|}$, which implies that $S_{|x|}\subset \fix (f)$.
\end{proof}

\begin{lema}\label{fix no emp}  $\fix (f)\neq \emptyset$.
 
\end{lema}

\begin{proof} Note that \begin{equation}\label{elip-1}
f^{n^m}(0,0) = h^m (f (0,0))
\end{equation}  for all $m\geq 1$, by Lemma \ref{bsp} (item 1). If $(0,0)$ is fixed by $f$ we already have the result. If $f(0,0)$ is different from $(0,0)$, we note that it is a  recurrent point for $h$, since $h$ is an irrational rotation. It then follows from Equation \eqref{elip-1}  that $f(0,0)$ is a recurrent point for $f$, and by Brouwer's Theorem \ref{brou} there exists $x\in \fix(f)$. 
 
\end{proof}

It is worth noting that Lemmas \ref{esf} and \ref{fix no emp} do not need the bounded displacement condition on $f$. Neither does the following.

\begin{lema}\label{0 fixed} $(0,0)\in \fix(f)$.
 
\end{lema}

\begin{proof} Let $r_0 = \inf \{|x|: f(x)=x\}$, noting that $r_0<\infty$ by Lemma \ref{fix no emp}, and that it is actually the minimum of this set, by continuity of $f$.  We shall show that $r_0 = 0$, for then we would have $f(0,0) = (0,0)$, as desired. Suppose then that $r_0>0$. By Lemma \ref{esf} we get that $S_{r_0}\subset \fix(f)$, hence $f$ preserves $D_{r_0}$, the open disk of radius $r_0$, which is also $h$-invariant, as $h$ is a rotation. We can then do the same argument of Lemma \ref{fix no emp} on the disk $D_{r_0}$, observing that Brouwer's theorem holds since the open disk is homeomorphic to the plane, and we obtain a fixed point of $f$ in $D_{r_0}$. This contradicts the definition of $r_0$.

\end{proof}

By Lemma \ref{0 fixed}, both $h$ and $f$ can be restricted to the annulus $A= \R^2\setminus \{(0,0)\}$, and they define an action of $BS(1,n)$ on $A$. Just as in Section \ref{s:diag rigidity}, we see that $f|_A$ is isotopic to the identity on $A$, as it preserves orientation and both ends of the annulus, so we can assume that $\gamma_x$ lies in $A$ for each $x\in A$. We also consider $d\theta$ as in Section \ref{s:diag rigidity}, and note that since $h$ is a rotation, we have  $d\theta(h\gamma)=d\theta(\gamma)$ for any curve $\gamma$ on $A$.

\begin{lema} \label{fix anillo} $\fix (f|_A)\neq \emptyset$.
 
\end{lema}

\begin{proof} Assume that $\fix (f|_A) = \emptyset$, and choose a sequence $k_i\to_i +\infty$ such that $h^{k_i} \to \id$ uniformly in compact sets.  Then $f^{n^{k_i}}=h^{k_i}fh^{-k_i} \to f$ uniformly in compact sets, so for any $x\in A$ we can apply Lemma \ref{franks adapt} to deduce that \begin{equation}\label{theta inf elip}
|d\theta(\gamma_x^{n^{k_i}})|\to_m +\infty 
\end{equation}  by the same argument used for Lemma \ref{br}, since we assumed that $f$ has no fixed point in $A$. On the other hand, as we did when proving Theorem \ref{teo contractivo} at the end of Section \ref{s:diag rigidity}, we use Lemma \ref{diag lift} to lift the action on $A$ to the universal cover $\tilde A$, and then we apply Lemma \ref{bsp} (item 1) to this lifted action to deduce that $\gamma_x^{n^m}$ and $h^m\gamma_{h^{-m}(x)}$ are homotopic in $A$ for all $m\geq 1$ and $x\in A$. This gives us that $$d\theta (\gamma^{n^m}_x) = d\theta(h^ m\gamma _{h^{-m}(x)}) = d\theta
(\gamma _{h^{-m}(x)})$$ and since $h$ is a rotation, we have $h^{-m}(x)\in S_{|x|}$ for all $m$. Hence, by compactness, we get that $|d\theta (\gamma^{n^m}_x)|\leq C_x$ where $C_x>0$ is a bound that does not depend on  $m$. This contradicts Equation \eqref{theta inf elip}, proving the lemma.  
 
\end{proof}

For $0\leq r<s\leq+\infty$ we consider the annulus $$A_{r,s}= \{x\in \R ^2: r<|x|<s\}$$ which is clearly $h$-invariant. If $A_{r,s}$ is also $f$-invariant, the same proof of Lemma \ref{fix anillo} applies to $A_{r,s}$ instead of $A$, hence we have the following:

\begin{rem}\label{anillos}  If $A_{r,s}$ is $f$-invariant, then $\fix (f|_{A_{r,s}})\neq \emptyset$. 
 \end{rem}

We are now ready to prove Theorem \ref{noac}:

\begin{proof} Suppose $f\neq\id$, so we have $\R^2\setminus \fix (f)\neq\emptyset$. Then by Lemma \ref{esf} we can write $\R^2\setminus \fix (f) = \bigcup _{\alpha} A_{\alpha}$, where $A_{\alpha}=A_{r(\alpha),s(\alpha)}$ for some $0\leq r(\alpha)<s(\alpha)\leq+\infty$. Note that each $A_{\alpha}$ is $f$-invariant, but contains no fixed points of $f$ by definition, thus contradicting Remark \ref{anillos}.

\end{proof}

\end{section}

\begin{section}{Parabolic case}\label{sec par}

\subsection{Rigidity on the eigenspace} \label{s:rig par}

In this section we assume that $f,h:\R^2\to\R^2$ are orientation preserving homeomorphisms with $hfh^{-1}=f^n$, such that $f$ has bounded displacement, and that $h$ is the linear map whose associated matrix is
$$P=\left(\begin{array}{cc} 1& 0\\ 1& 1\end{array}\right),$$
i.e. $h(x,y)=(x,x+y)$. Denote the vertical axis by $L=\{(0,y):y\in \R\}$, noting that $L=\fix(h)$. Then we shall show:

\begin{teo}\label{par fijo} In the conditions above, we have $L\subset \fix(f)$.
 
\end{teo}

We show a non-trivial example of such an action in \S \ref{s: ex par}. Assuming Theorem \ref{par fijo} we can show the following:

\begin{cor} \label{cor-par} Let $\varphi$ be a $BS(1,n)$-action by planar orientation preserving homeomorphisms, where $\varphi(b)$ has bounded displacement and $\varphi(a)$ is a parabolic linear map. Then the eigenspace of $\varphi(a)$ is contained in $\fix(\varphi(b))$.
\end{cor}

\begin{proof} Recall that the linear map $\varphi(a)$ is parabolic when its associated matrix is conjugated in $GL(2,\R)$ to either $P$ or $$P'=\left(\begin{array}{cc} -1& 0\\ 1& -1\end{array}\right),$$ and in any case the eigenspace is a line, and can be written as $\fix(\varphi(a^2))$.  Note that the associated matrix of $\varphi(a^2)$ is conjugated to $P$ in both cases, and that $\langle b,a^2 \rangle \cong BS(1,n^2)$. Thus conjugating the action of $\langle b,a^2 \rangle$ and applying Theorem \ref{par fijo} yields the result.

\end{proof} 

In the rest of this section we prove Theorem \ref{par fijo}, through a series of lemmas.

\begin{lema}\label{lem bounded}
If the $f$-orbit of $x\in L$ is bounded, then $f^j(x)\in L$ for all $j$.
\end{lema}

\begin{proof} Note that the $h$-orbit of any $y\notin L$ is unbounded. Then, if $x\in L$ and $f^j(x)\notin L$ for some $j$, we see that $$f^{jn^k}(x)=h^kf^jh^{-k}(x)=h^kf^j(x)$$ is unbounded as $k\to \infty$.

\end{proof}

In particular, Lemma \ref{lem bounded} applies to points in $L$ that are periodic for $f$.

\begin{lema} \label{lem hat}If $x\in L$ and $f^j(x)\in L$ for some $j\neq 0$, then $f(x)\in L$.

\end{lema}

\begin{proof}  Observe that if $x$ and $f^j(x)$ belong to $L$, then 
\begin{equation} \label{f^j en L}
f^{nj}(x)=hf^jh^{-1}(x)=f^j(x).
\end{equation} So $f^{j(n-1)}(x) = x$, and by Lemma \ref{lem bounded} we deduce that  $f(x)\in L$.

\end{proof}

\begin{lema}\label{L}  For $x\in L$ there are two possibilities: 
\begin{itemize}
\item If $f(x)\in L$, then $f^{n-1}(x) = x$ and $f^j(x)\in L$ for all $j$.
\item If $f(x)\notin L$, then $f^j (x)\notin L$ for all $j\neq 0$.
\end{itemize}

\end{lema}

\begin{proof} 
For the first item, note that if $x$ and $f(x)$ are in $L$, then Equation \eqref{f^j en L} holds with $j=1$, giving $f^{n-1}(x) = x$, and then apply Lemma \ref{lem bounded}. The second item follows directly from Lemma \ref{lem hat}.

  
\end{proof}

From Lemma \ref{L} one inmediately deduces the following:

\begin{cor}  \label{cL} $ \fix (f^{n-1})\cap L = \{x\in L: f(x)\in L\}$.
 
\end{cor}

Let us introduce some notation. Put $$H^+=\{(x,y):x>0\} \quad \mbox{and} \quad H^-=\{(x,y):x<0\}$$ which are respectively the right and left components of $\R^2\setminus L$. These are the right and left {\em sides} of $L$, and moreover, we can speak about right and left sides of any oriented and properly embedded topological line on the plane, in particular of $f^j(L)$ for $j\in\Z$, which are $f^j(H^+)$ and $f^j(H^-)$ respectively. Note that since $f^j$ preserves orientation and has bounded displacement, the right side of $f^j(L)$ meets the right side of $L$, and the same for the left sides.
Let $$\alpha = \sup\{ |(f(x))_1|:x\in L\} \quad \mbox{and} \quad B=[-\alpha, \alpha]\times \R $$ where the subindex $1$ stands for the first coordinate. By definition we have $f(L)\subset B$, and note that $\alpha\leq K$, where $K$ is the bound on the displacement of $f$.
Finally, let $$C= \fix (f^{n-1})\cap L = \{x\in L: f(x)\in L\}$$ i.e. the set in Corollary \ref{cL}, and observe that it is closed. It is also $f$-invariant: If $x\in C$ then $f^j(x)\in L$ for all $j$ by Lemma \ref{L}, and from $f(x),f^2(x)\in L$ we deduce that $f(x)\in C$, while from $f^{-1}(x),x\in L$ we get $f^{-1}(x)\in C$.   We shall focus on this set, with the objective of proving that it agrees with $L$ and is contained in $\fix(f)$, thus proving Theorem \ref{par fijo}.

\begin{lema}\label{ne}  If $C=\emptyset$, then $f^j(L)\subset B$ for all $j$.
 
\end{lema}

\begin{proof} By definition, if $C=\emptyset$ then $f(L)\cap L =\emptyset$. Thus $f(L)$ is contained in either $H^+$ or $H^-$. Assume that $f(L)\subset H^+$, as the other case is symmetrical. Since $f$ preserves orientation and has bounded displacement, we see that $f(H^+)\subset H^+$, and deduce recursively that $f^{j+1}(L)$ is in the right side of $f^j(L)$ for all $j$. On the other hand, the fact that $f(L)\subset B$ implies that for $k\geq 1$ we have $$f^ {n^k}(L)= h^kf(L)\subset h^k (B) = B.$$ Then for $j\geq 1$ we pick $k$ with $j\leq n^k$ and note that $f^j(L)$ lies at the right side of $L$ and the left side of $f^{n^k}(L)$, which intersect in a region contained in $B$, since $f^{n^k}(L)\subset B$. For $j\leq -1$ we argue similarly.

\end{proof}

In fact, the situation described in Lemma \ref{ne} does not happen, as we see next:

\begin{lema}\label{cuenta} $C$ is non-empty.
 
\end{lema}

\begin{proof} Take $p\in L$ and write $f(p)=(a,b)$. Then we have $$  f^{n^k}(p) = h^kf h^{-k}(p) = h^kf(p) = (a,b+ka).$$
On the other hand:
$$ f^{n^k}(f(p)) = h^kf h^{-k}(a,b) = h^kf(a,b-ka).$$
We define $(a_k,b_k) = f(a,b-ka)$, and so we can write \begin{equation} \label{a_k}
f(f^{n^k}(p)) = f^{n^k}(f(p)) = (a_k,b_k+ka_k)
\end{equation} 
Since the displacement of $f$ is bounded by $K$, when we compare the second coordinates we must have $$|b_k - b +k(a_k-a)|\leq K,$$
and on the other hand, since $(a_k,b_k) = f(a,b-ka)$, we get $$|b_k - b + ka|\leq K.$$
From the equations above and the triangular inequality, we deduce that $|k(a_k-2a)|\leq 2K$. Hence $a_k \to 2a$ as $k\to +\infty$. 

Now we assume that $C=\emptyset$, aiming to reach a contradiction. Note that in this case $\alpha>0$, and thus we may choose $p\in L$ so that $|a|>\alpha/2$.  Since $a_k \to_k 2a$, for this $p$ we have $|a_k|>\alpha$ when $k$ is large enough. On the other hand, Lemma \ref{ne} implies that $f^{n^k+1}(p)\in B$ for all $k$, and then Equation \eqref{a_k} gives that $|a_k|\leq\alpha$ for all $k$. So we get a contradiction.

\end{proof}

\begin{lema}\label{conexo} $C$ is connected.
 
\end{lema}

\begin{proof} Suppose that $C$ is not connected and let $I$ be a bounded complementary interval of $C$ in $L$. By Lemma \ref{L} (second point) we see that $f^j(I)$ is disjoint from $L$ for all $j\neq 0$, and hence the arcs $f^j(I)$ for $j\in\Z$ are pairwise disjoint. On the other hand we have $C=Fix(f^{n-1})\cap L$, in particular the endpoints of $I$ are fixed by $f^{n-1}$, so all the closed arcs $f^{j(n-1)}(\overline{I})$ for $j\in\Z$ have these same endpoints. Then we take $k>0$ and note that $$ f^{n^k(n-1)}(I) = h^kf^{n-1}h^{-k}(I) = h^kf^{n-1}(I) $$ which cannot be disjoint from $f^{n-1}(I)$ by a standard Jordan curve argument (see Figure \ref{fl}), that we sketch as follows: 

\begin{figure}[ht]
\psfrag{i}{$I$}
\psfrag{fi}{$\,\, f^{n-1} (I)$}
\psfrag{fki}{$h^k f^{n-1} (I)$}

\begin{center}
{\includegraphics[scale=0.5]{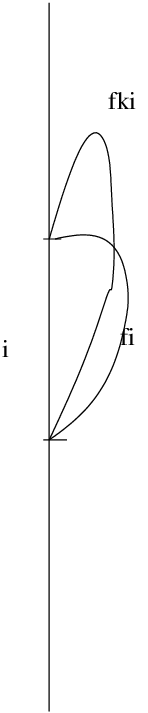}}

\caption{Lemma \ref{conexo}}\label{fl}
\end{center}
\end{figure}

We show it for $k$ large, which is sufficient. Divide the arc $f^{n-1}(\overline{I})$ as concatenation of $\gamma$ and $\delta$, where $q=\gamma(1)=\delta(0)$ has maximum horizontal coordinate, i.e. $|q_1|$ is maximum among the points in  $f^{n-1}(\overline{I})$. For the sake of notation assume that $f^{n-1}(I)$ is at the right side of $L$, the other case is symmetric. Then $h^kf^{n-1}(\overline{I})$ is contained in the band $B'=[0,q_1]\times\R$ for all $k\geq 0$. We see that $\delta$ splits $B'$ into at least two components, and $\gamma$ is contained (except for the endpoint $q$) in the lower unbounded component of $B'\setminus \delta$. If $k$ is large enough then $h^k(q)$ is in the upper unbounded component of $B'\setminus \delta$, so $h^k\gamma$ must intersect $\delta$, as it joins $h^k\gamma(0)=\gamma(0)$ with $h^k(q)$ inside $B'$. This means that $f^{n^k(n-1)}(I)=h^kf^{n-1}(I) $ interescts $f^{n-1}(I)$, thus reaching a contradiction and concluding the proof.

\end{proof}

Lemma \ref{conexo} allows us to write $$ L=I\cup C\cup J $$ where, for some $-\infty\leq a \leq b \leq +\infty$, we have $I=\{0\}\times (-\infty,a)$ and $J=\{0\}\times (b,+\infty)$. We want to show that $I$ and $J$ are empty. It shall be useful to define $$ \hat f=f^{n-1} \quad \mbox{and} \quad \hat K=(n-1)K. $$ Note that $\langle\hat f,h\rangle$ is also an action of $BS(1,n)$ by orientation preserving homeomorphisms, where $\hat f$ has bounded displacement with constant $\hat K$, and we have $C=\fix(\hat f)\cap L$. If $J\neq\emptyset$ then $\hat f(\overline{J})$ meets $\overline{J}$ exactly at $(0,b)$, so $\overline{J}\cup \hat f(\overline{J})$ is a line, which is contained in the $\hat K$-neighbourhood of $J$ by the bounded displacement condition. Then there is exactly one component of $\R^2\setminus (\overline{J}\cup \hat f(\overline{J}))$ that is contained in the $\hat K$-neighbourhood of $J$, and we denote it by $D_J$. Note that $D_J$ can also be defined as the component of $\R^2\setminus (\overline{J}\cup \hat f(\overline{J}))$ that is contained in one side of $L$, namely the side that contains $\hat f(J)$. If $I\neq\emptyset$ we can define $D_I$ analogously. These sets are sketched in Figure \ref{fl2}. 

\begin{figure}[ht]
\psfrag{i}{$I$}
\psfrag{c}{$C$}
\psfrag{j}{$J$}

\begin{center}
{\includegraphics[scale=0.5]{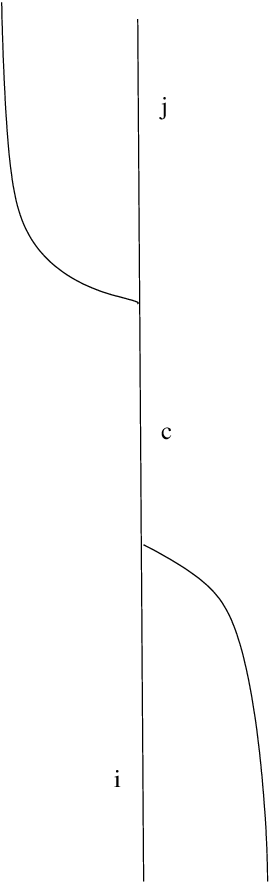}}

\caption{The sets $I$ and $J$, assuming they are non-empty, and their images by $\hat f$.}\label{fl2}
\end{center}
\end{figure}

\begin{lema}\label{J side} Assume that $J\neq\emptyset$. Then:
\begin{enumerate}
\item The sets $\hat f^j(D_J)$ are pairwise disjoint for $j\in\Z$. 
\item $\hat f^j(D_J)$ is in the same side of $L$ as $\hat f(J)$ for $j\geq 0$, and on the opposite side of $L$ for $j\leq-1$.

\end{enumerate}
 The corresponding statements hold also for $I$, in case it is non-empty.
\end{lema}

\begin{proof}
We need to recall some facts about orientation in the plane. If $\gamma$ and $\delta$ are oriented, properly embedded rays starting at a point $p\in\R^2$, the {\em sectors} defined by them are the components of $\R^2\setminus(\gamma\cup\delta)$. If $R$ is one of those sectors, we can say whether $R$ is at the right or the left of $\gamma$, according to whether $R$ lies at the right or left side of any (oriented, properly embedded) line  that extends $\gamma$ and does not meet $R$. Note that this is well defined, and that one sector is at the right of $\gamma$ and left of $\delta$, while the other is at the left of $\gamma$ and right of $\delta$. If $g$ is an orientation preserving homeomorphism that fixes $p$, then  $g$ takes the sectors defined by $\gamma$ and $\delta$ to the sectors defined by $g\gamma$ and $g\delta$, preserving the left and right sectors of each ray.

Give $\overline{J}$ the orientation of $L$ (i.e. upwards), so we can see it as a ray starting at $(0,b)$. Recalling that $(0,b)\in C\subset \fix(\hat f)$, we can observe that for $j\in\Z$ the sets $\hat f^j(J)$ are all oriented, properly embedded rays starting at $(0,b)$. The set $D_J$ is one of the sectors defined by $\overline{J}$ and $\hat f(\overline{J})$.

For simplicity of notation we assume that $\hat f(J)\subset H^-$, i.e. is at the left side of $L$, but the argument will work for the other case as well. Under this assumption we have $D_J\subset H^-$ by definition, and it is the sector at the left of $\overline{J}$ and the right of $\hat f(\overline{J})$, according to the definition above. 

First we show that $\hat f^{-1}(J)\subset H^+$: Since $\hat f^{-1}$ preserves orientation and fixes $(0,b)$, we have that $\hat f^{-1}(D_J)$ is the sector at the right of $\overline{J}$ and left of $\hat f^{-1}(\overline{J})$. Hence if $\hat f^{-1}(J)$ was in the left side of $L$, it would have to contain $H^+$, but this cannot happen: Recalling that $D_J$ is in the $\hat K$-neighbourhood of $J$ by definition, we see that $\hat f^{-1}(D_J)$ must be in a $2\hat K$-neighbourhood of $J$ by bounded displacement, but on the other hand $H^+$ is not contained in any proper neighbourhood of $J$. So we have $\hat f^{-1}(J)\subset H^+$, and moreover $\hat f^{-1}(D_J)\subset H^+$, in particular it is disjoint from $D_J$.

We will show the lemma by induction on $|j|$, adapting the argument we just made. The base case $j=0$ is trivial by the definition of $D_J$. So we take $j>0$ and assume that for $|i|<j$ the sets $\hat f^i(D_J)$ are pairwise disjoint, and contained in $H^-$ for $0\leq i <j$ and in $H^+$ for $-j<i<0$. We need to show that $\hat f^j(D_J)\subset H^-$, that $\hat f^{-j}(D_J)\subset H^+$, and that these sets are disjoint from $\hat f^i(D_J)$ for $|i|<j$ (and from each other, but that is given by the previous statements, as $H^+$ and $H^-$ are disjoint). We give the argument for $\hat f^j(D_J)$, the one for $\hat f^{-j}(D_J)$ is analogous.

Since $\hat f^{j-1}(D_J)\subset H^- $ by induction hypothesis, we see that  $\hat f^j(J) \subset H^-$, as this curve is in the boundary of $\hat f^{j-1}(D_J)$ and cannot meet $L$. We consider the line $L_j=I\cup C\cup \hat f^j(J)$ (which is oriented and properly embedded), and note that $H^+$ and $\hat f^i(D_J)$ for $|i|<j$ are in the right side of $L_j$. So we need to show that $\hat f^j(D_J)$ is in the left side of $L_j$. For this we proceed as before: We use that $\hat f$ preserves orientation and fixes $(0,b)$ to deduce that $\hat f^j(D_J)$ is the sector at the left of $\hat f^j(J)$ and the right of $\hat f^{j+1}(J)$. Thus if this sector was not contained in the left side of $L_j$, it would have to contain $I\cup C$. On the other hand, $\hat f^j(D_J)$ is contained in the $(j+1)\hat K$-neighbourhood of $J$, which does not contain $I\cup C$. Thus we have finished the induction step, observing that a similar argument works for $\hat f^{-j}(D_J)$, adjusting the notation as necessary.

Note that the argument just presented works for the case when  $\hat f(J)\subset H^+$ changing notation appropriately. Alternatively, since we did not use the map $h$, it is valid to replace $\hat f$ by $\hat f^{-1}$. The result for $I$, instead of $J$, is also analogous.

\end{proof}

Note that Lemma \ref{J side} (item 1) implies that $D_J$ cannot meet $\hat f^j(J)$ for any $j$.

\begin{lema} \label{J side 2} $ $
\begin{itemize}
\item If $J\neq\emptyset$, then $\hat f(J)$ is in the left side of $L$.
\item If $I\neq\emptyset$, then $\hat f(I)$ is in the right side of $L$.
\end{itemize}
\end{lema}

\begin{proof} The key fact will be that $h$ translates upwards in $H^+$ and downwards in $H^-$. Assume that $J\neq\emptyset$ and $\hat f(J)\subset H^+$, i.e. is in the right side of $L$. We will show that $h^k \hat f(J)$ intersects $D_J$ for $k$ large enough, which contradicts Lemma \ref{J side} because $h^k \hat f(J)=\hat f^{n^k}(J)$, as we remarked above. Consider a vertical band of the form $$B'=[0,\epsilon]\times\R$$ for $\epsilon>0$ small enough so that $\hat f(J)$ meets $L'=\{\epsilon\}\times\R$. Note that $B'\setminus \hat f(J)$ has a unique lower unbounded component, and clearly $D_J\cap B'$ is a different component of $B'\setminus \hat f(J)$. Let $\gamma$ be a small arc with $\gamma(0)\in J$ and $\gamma((0,1])$ contained in $D_J\cap B'$. Then $h^{-k}\gamma$ is contained in $B'$ for all $k$, with $h^{-k}\gamma(0)=\gamma(0)$, and since $\gamma(1)\in H^+$ we have that $h^{-k}\gamma(1)$ must belong to the lower unbounded component of $B'\setminus \hat f(J)$ for $k$ large enough. Hence $h^{-k}\gamma$ must intersect $\hat f(J)$, since it joins different components of $B'\setminus \hat f(J)$ inside $B'$. Therefore $h^k\hat f(J)$ meets $\gamma((0,1])$, which is in $D_J$.

The second point is analogous, noting that $D_I$ is unbounded in the downwards direction, in which $h$ moves the points in $H^-$.

\end{proof}

We are ready to finish the proof of Theorem \ref{par fijo}:

\begin{proof} First we show that $C=L$, i.e. that $I$ and $J$ are empty. 
Combining both items of Lemma \ref{J side 2} we see that, assuming $J\neq\emptyset$, we have $D_J=H^-\cap \hat f(H^+)$, which is to say that $\hat f^{-1}(D_J)$ is the set of the points of $H^+$ that are mapped to $H^-$ by $\hat f$. Take any $p\in \hat f^{-1}(D_J)$, and note that for $k$ large enough we have $h^{-k}(p)\notin \hat f^{-1}(D_J)$, since $p\in H^+$ and $\hat f^{-1}(D_J)$ is in the $\hat K$-neighbourhood of $J$. It follows that $\hat f h^{-k}(p)\in H^+$, and so $h^k\hat f h^{-k}(p)\in H^+$. On the other hand, applying Lemma \ref{J side} we see that the points  $p\in \hat f^{-1}(D_J)$ have $\hat f^j(p)\in H^-$ for all $j>0$.  This is a contradiction, as we just showed that $\hat f^{n^k}(p)=h^k\hat f h^{-k}(p)$ was in $H^+$. This shows that $J=\emptyset$, and the same argument works for proving that $I=\emptyset$.

So we have $C=L$, and are ready to show that $L\subset\fix(f)$. Recall that $C$ is $f$-invariant, by Lemma \ref{L} and the definition of $C$. So we can see $f|_L$ as a line homeomorphism, that preserves the orientation of $L$ since $f$ has bounded displacement. Then $f|_L^{n-1}=\hat f|_L = \id_L$, and this implies $f|_L=\id_L$ by the general theory of orientation preserving line homeomorphisms, thus finishing the proof.   

\end{proof}

\subsection{Example} \label{s: ex par}

Here we give an example of a faithful action $\langle f,h\rangle$ of $BS(1,n)$ on the plane, with $h(x,y)=(x,x+y)$ as in \S \ref{s:rig par} and $f$ an orientation preserving homeomorphism with bounded displacement. We follow the same idea we used in the constructions of \S \ref{sec examples}, setting $$D_0=\{(x,y):0\leq y < |x| \} $$ and $D_k=h^kD_0$ for all $k\in\Z$. Note that these sets are pairwise disjoint and $\bigcup_k D_k =\R^2\setminus L$, where $L$ is the vertical axis. As in \S \ref{sec examples}, given a continuous flow $\phi:\R\to Homeo_+(D_0)$ we can define an action of $BS(1,n)$ on $\bigcup_k D_k$ by the formula $f(x)=h^k\circ \phi(1/n^k)\circ h^{-k}(x)$ for $x\in D_k$. This action is faithful if $\phi(1)$ has a free orbit, as shown in \cite{rivas jgt}.

We extend $f$ to the plane by setting $f(x)=x$ for $x\in L$. Then we need to provide a flow $\phi$ on $D_0$ that has a free orbit, and so that $f$ is an homeomorphism with bounded displacement. We take $\phi:\R\to Homeo_+(D_0)$ so that:
\begin{itemize}
\item $\phi(s)$ preserves each vertical line for all $s\in\R$, and
\item $|\phi(s)(x)-x|<K$ for all $s\in\R$ and $x\in D_0$, for some constant $K>0$.
\end{itemize}
There are many examples of such flows that have free orbits, e.g. the flow defined by a non-zero vertical vector field supported in $D_0$, that vanishes on the horizontal lines $L_j = \R\times\{ y_0+jK\}$ for all $j\in\Z$ and some fixed $y_0$.      

Note that $h$ is a translation on each vertical line, so the conditions on $\phi$ imply that $f$ has bounded displacement with constant $K$. To show that $f$ is an homeomorphism, observe that since $\phi$ is supported on $D_0$ we have $$ |\phi(s)(x)-x|<|x_1| \quad \mbox{for all } x\in D_0, s\in\R $$ where $x_1$ is the first coordinate of $x$. Since $h$ is a translation on each vertical line, this implies $$|f(x)-x|<|x_1| \quad \mbox{for all } x\in \R^2\setminus L. $$ This shows that $f$ and $f^{-1}$ are continuous at the points in $L$, which are the only points where continuity was not immediate. Thus we have built the desired example.

A similar construction can be done for $h_1(x,y)=(-x,x-y)$, the other canonical form for a parabolic map, taking $$D_0=\{(x,y):x>0,\,0\leq y <x\}\cup \{(x,y):x<0,\,x< y \leq 0\}$$ which is a fundamental domain for $h_1$ acting on $\R^2\setminus L$.

\end{section}

\begin{section}{Applications to actions on the torus} \label{s:toro}

In this section we consider actions of $BS(1,n)$ by homeomorphisms of the torus $\T^2$. As before, such an action is given by two homeomorphisms $f,h:\T^2\to\T^2$ satisfying $hfh^{-1}=f^n$. In order to apply our results on planar actions, we would like to lift the action on $\T^2$ to the universal cover $\R^2\to\T^2$. So we cite the relevant results in this direction, which are based on Theorem 3 in \cite{glp}:
 
\begin{teo} \label{gl} Let $ \langle f, h \rangle$ be a faithful action of $BS(1, n)$ by homeomorphisms of $\T^2 $. Then there exists a positive integer $k$ such that $f^k$ is isotopic to the
identity and has a lift to the universal cover whose rotation set
is the single point $\{(0,0)\}$. Moreover, the set of $f^k$-fixed points is non-empty.
\end{teo}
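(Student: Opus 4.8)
The plan is to reduce the statement to linear algebra in $GL(2,\Z)$, the elementary transformation rules for rotation sets under the relation $hfh^{-1}=f^n$, and a final Brouwer-theoretic fixed point argument. First I would pass to the induced classes $A=[h]$, $B=[f]$ in $\mathrm{MCG}(\T^2)=GL(2,\Z)$: the relation gives $ABA^{-1}=B^n$, so $B$ and $B^n$ have the same characteristic polynomial, and hence the multiset of eigenvalues of $B$ is invariant under $\lambda\mapsto\lambda^n$. Since $\det B=\pm1$ no eigenvalue vanishes, so each eigenvalue satisfies $\lambda^{n-1}=1$ or $\lambda^{n^2-1}=1$ and is a root of unity; moreover a parabolic class of $GL(2,\Z)$ is never conjugate to its $n$-th power when $n\ge2$, which rules out the non-diagonalizable cases. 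Thus $B$ has finite order $k_0$, so $f^{k_0}$ is isotopic to the identity and still satisfies $hf^{k_0}h^{-1}=(f^{k_0})^n$; I would then replace $f$ by $g=f^{k_0}$.

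Next I would fix lifts $\tilde g,\tilde h$ of $g,h$ to $\R^2$. As $g$ is isotopic to the identity, $\rho(\tilde g)$ is a nonempty compact convex set, and $\rho(\tilde h\tilde g\tilde h^{-1})=A\,\rho(\tilde g)$ (independently of the chosen lift of $h$) while $\rho(\tilde g^n)=n\,\rho(\tilde g)$. Since $\tilde h\tilde g\tilde h^{-1}$ and $\tilde g^n$ are both lifts of $g^n$ they differ by some $w\in\Z^2$, so $A\,\rho(\tilde g)=n\,\rho(\tilde g)+w$. Comparing areas (here $|\det A|=1$ while $n\ge2$) forbids $\rho(\tilde g)$ from being two-dimensional, and if $\rho(\tilde g)$ were a nondegenerate segment its direction would be an eigenvector of $A$ with eigenvalue $\pm n$, impossible for $A\in GL(2,\Z)$ since that forces $n\mid\det A=\pm1$. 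Hence $\rho(\tilde g)=\{v\}$ is a single point with $(A-nI)v=w$, and as $\det(A-nI)=\chi_A(n)\neq0$ (again by the eigenvalue obstruction) one gets $v=p/q$ with $p\in\Z^2$, $q\in\N$. Then $\rho(\tilde g^q)=\{p\}\subset\Z^2$, so the lift $\tilde g^q-p$ of $f^{k_0q}$ has rotation set exactly $\{(0,0)\}$; this is the required power $k:=k_0q$.

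It would remain to produce a fixed point of $g':=f^{k}$. Since $\bs(1,n)$ is solvable, hence amenable, the action of $\langle f,h\rangle$ on $\T^2$ preserves a Borel probability measure $\nu$, so $g'$ is non-wandering on the nonempty support of $\nu$. Moreover, choosing the lift $\tilde F=\tilde g^q-p$ and any lift $\tilde H$ of $h$, the integer $w$ now vanishes (both sides of the relation have rotation set $\{(0,0)\}$), giving the exact planar identity $\tilde H\tilde F\tilde H^{-1}=\tilde F^n$ with $\tilde F$ of bounded displacement. Since $(0,0)$ is the unique, hence extreme, point of $\rho(\tilde F)$, I would invoke Franks' realization of rational rotation vectors by periodic orbits for non-wandering torus homeomorphisms: it supplies a periodic point of $g'$ whose lift is periodic for $\tilde F$, and then Brouwer's Theorem~\ref{brou} gives a fixed point of $\tilde F$, whose projection lies in $\fix(f^{k})$. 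An equivalent route is by contradiction, building a periodic disk chain for $\tilde F$ from a recurrent point of $g'$ (furnished by $\nu$) together with the vanishing of all rotation vectors, which contradicts Franks' Theorem~\ref{dc}.

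I expect this last step to be the real obstacle. The reduction to $GL(2,\Z)$ and to a single rational rotation vector is essentially formal, but once $\rho(\tilde F)=\{(0,0)\}$ this vector is only a boundary point of the (degenerate) rotation set, so the convenient ``rotation vector in the interior $\Rightarrow$ fixed point'' statements do not apply; one genuinely needs to realize an extreme rational rotation vector by a periodic orbit, and in particular to close up the disk chain so that it is periodic for the chosen lift $\tilde F$ rather than only modulo a deck transformation -- that is the delicate part.
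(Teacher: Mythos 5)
The paper does not actually prove this statement: it is quoted verbatim as Theorem~3 of Guelman--Liousse \cite{glp}, so there is no internal proof to compare against. Judged on its own terms, your first two steps are correct and are essentially the standard argument. The reduction in $GL(2,\Z)$ is fine: the relation forces the eigenvalues of $[f]$ to be permuted by $\lambda\mapsto\lambda^{n}$, hence to be roots of unity, and the parabolic classes are excluded because $\left(\begin{smallmatrix}1&m\\0&1\end{smallmatrix}\right)$ and $\left(\begin{smallmatrix}1&nm\\0&1\end{smallmatrix}\right)$ are not conjugate in $GL(2,\Z)$ for $m\neq 0$, $n\geq 2$; so some power $f^{k_0}$ is isotopic to the identity. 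The rotation-set step is also sound: $A\,\rho(\tilde g)=n\,\rho(\tilde g)+w$ kills the two-dimensional case by area, kills the segment case because $\pm n$ cannot be an eigenvalue of an element of $GL(2,\Z)$ (rational root theorem), and the remaining equation $(A-nI)v=w$ with $\det(A-nI)=\chi_A(n)\neq 0$ gives a single rational rotation vector, normalized to $(0,0)$ after a further power and a deck translation of the lift.

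The gap is where you suspected it, but it is slightly different from the one you name. The theorem you invoke (Franks' realization of rational rotation vectors by periodic orbits) requires the vector to lie in the \emph{interior} of the rotation set, which is vacuous when $\rho(\tilde F)=\{(0,0)\}$; and the amenability argument only gives non-wandering on $\operatorname{supp}\nu$, not the global non-wandering hypothesis of Franks' theorems, so your proposed disk-chain repair does not close up either. The correct tool is the measure-theoretic fixed point theorem of Franks (\emph{Recurrence and fixed points of surface homeomorphisms}, ETDS 8 (1988)) in the form sharpened by Le~Calvez's equivariant foliated Brouwer theory: if a torus homeomorphism isotopic to the identity preserves a Borel probability measure whose rotation vector for the lift $\tilde F$ is $(0,0)$, then $\tilde F$ has a fixed point; no full support or global non-wandering is needed. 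Your amenability observation supplies exactly such a measure, and since $\rho(\tilde F)=\{(0,0)\}$ every invariant measure has zero rotation vector, so the conclusion $\fix(f^k)\neq\emptyset$ (indeed a fixed point of the irrotational lift, which is what Lemma~\ref{up} later needs) follows. With that substitution your argument is complete; as written, the final step cites a theorem whose hypotheses are not met.
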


Recall that $\langle f^k, h \rangle$ is also an action of $BS(1,n)$, so Theorem \ref{gl} allows us to restrict our study to actions where $f$ is isotopic to the identity. Moreover, we can assume that $\fix(f)\neq\emptyset$ and that $f$ has a lift $\tilde f$ to the universal cover with rotation set $\{(0,0)\}$.  We say that $\tilde f$ is the {\em irrotational lift} of $f$. Under these assumptions we can lift the action, by the following result that was proved in \cite{agx}. 

\begin{lema}\label{up}  Let $\tilde f: \R ^ 2 \to \R^2$ be the irrotational lift of $f$, and $\tilde h:\R ^ 2 \to \R ^ 2 $ be any lift of $h$.  Then we have $\tilde h \tilde f \tilde h ^{-1} = \tilde f ^n$.

\end{lema}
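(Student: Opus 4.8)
The plan is to compare the two lifts $\tilde h\tilde f\tilde h^{-1}$ and $\tilde f^{\,n}$ of the torus map $f^n$ by means of their rotation sets. First, since $hfh^{-1}=f^n$ and $\tilde h,\tilde f$ are lifts of $h,f$, the map $\tilde h\tilde f\tilde h^{-1}$ is a lift of $f^n$. As $f$, hence $f^n$, is isotopic to the identity, its lift $\tilde f^{\,n}$ commutes with the deck group $\Z^2$; therefore the two lifts of $f^n$ differ by an integer translation, and there is a unique $v\in\Z^2$ with $\tilde h\tilde f\tilde h^{-1}=T_v\circ\tilde f^{\,n}$, where $T_v$ denotes translation by $v$. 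The lemma is exactly the assertion $v=0$, and I obtain it by computing the rotation set $\rho$ of each side.

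For the right-hand side: by hypothesis $\tilde f$ is the irrotational lift of $f$, so $\rho(\tilde f)=\{(0,0)\}$; the scaling property of the rotation set of torus homeomorphisms gives $\rho(\tilde f^{\,n})=n\,\rho(\tilde f)=\{(0,0)\}$, and since $T_v$ commutes with $\tilde f^{\,n}$ one has $\rho(T_v\circ\tilde f^{\,n})=\rho(\tilde f^{\,n})+v=\{v\}$. For the left-hand side: let $A=h_\ast\in GL_2(\Z)$ be the action of $h$ on $H_1(\T^2)$, so that $\tilde h(z)-Az$ is $\Z^2$-periodic and hence bounded, say $|\tilde h(z)-Az|\le C$ for all $z$. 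Using $(\tilde h\tilde f\tilde h^{-1})^k=\tilde h\tilde f^{\,k}\tilde h^{-1}$ and setting $y=\tilde h^{-1}(x)$, one computes
$$(\tilde h\tilde f\tilde h^{-1})^k(x)-x \;=\; \tilde h\big(\tilde f^{\,k}(y)\big)-\tilde h(y) \;=\; A\big(\tilde f^{\,k}(y)-y\big)+O(1),$$
with the $O(1)$ bounded by $2C$, uniformly in $x$ and $k$. Dividing by $k$ and passing to the limit, this uniform estimate forces $\rho(\tilde h\tilde f\tilde h^{-1})=A\,\rho(\tilde f)=A\{(0,0)\}=\{(0,0)\}$. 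Comparing the two computations yields $\{v\}=\{(0,0)\}$, i.e.\ $v=0$, which is the $\bs$-equation $\tilde h\tilde f\tilde h^{-1}=\tilde f^{\,n}$.

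The only step that requires genuine care is the behaviour of the rotation set under the operations used: scaling $\rho(\phi^n)=n\rho(\phi)$, translation by a commuting deck transformation $\rho(T_v\phi)=\rho(\phi)+v$, and, above all, conjugation by the lift $\tilde h$ of a homeomorphism that need not be isotopic to the identity. The last point is the least routine: it rests on the linear-growth estimate $|\tilde h(z)-Az|\le C$, which reduces the computation of $\rho(\tilde h\tilde f\tilde h^{-1})$ to that of $\rho(\tilde f)$ up to a bounded error and the linear change of coordinates $A$. Since $A$ is invertible and fixes $0$, the single-point rotation set $\{(0,0)\}$ is preserved; no compactness, smoothness, or area-preservation hypothesis is needed, and the argument is purely about lifts and rotation sets.
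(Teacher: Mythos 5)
Your proof is correct, but note that the paper itself contains no proof of Lemma \ref{up}: it is stated with the remark that it ``was proved in \cite{agx}'', so there is no in-paper argument to match yours against. Your route --- write $\tilde h\tilde f\tilde h^{-1}=T_v\circ\tilde f^{\,n}$ for some $v\in\Z^2$ (legitimate, since both are lifts of $f^n$ and $T_v$ commutes with $\tilde f^{\,n}$ because $f$ acts trivially on $\pi_1(\T^2)$), and then pin down $v$ by computing rotation sets --- is sound: the bound $|\tilde h(z)-Az|\leq C$ holds because $\tilde h(z)-Az$ is continuous and $\Z^2$-periodic, and the passage from $\rho(\tilde f)=\{(0,0)\}$ to $\rho(\tilde h\tilde f\tilde h^{-1})=\{(0,0)\}$ is valid because a singleton rotation set forces $(\tilde f^{\,k}(y)-y)/k\to 0$ uniformly in $y$ (otherwise a subsequence would produce a nonzero rotation vector), so the conjugation estimate really does kill the error term. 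A shorter alternative, which uses the \emph{other} conclusion of Theorem \ref{gl}, runs as follows: since $\rho(\tilde f)=\{(0,0)\}$ and $\fix(f)\neq\emptyset$, the irrotational lift $\tilde f$ has a genuine fixed point $\tilde x$; then $\tilde h(\tilde x)$ is fixed by $\tilde h\tilde f\tilde h^{-1}=T_v\circ\tilde f^{\,n}$, while $\tilde h(\tilde x)$ lifts the point $h(x)\in\fix(f^n)$ and irrotationality of $\tilde f$ forces $\tilde f^{\,n}(\tilde h(\tilde x))=\tilde h(\tilde x)$; evaluating both sides at $\tilde h(\tilde x)$ gives $v=0$. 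Your rotation-set computation is slightly more general (it never uses $\fix(f)\neq\emptyset$) at the price of the uniform-convergence point above; the fixed-point version is more elementary and closer in spirit to how such lifting identities are usually verified in this literature.
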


As an application of rigidity in the diagonalizable case, we get a new proof of a result already found in \cite{agx}: 

\begin{teo} \label{no A} There are no faithful actions $ \langle f, h \rangle$ of $BS(1,n)$ by toral homeomorphisms so that $h$ is an Anosov map with stretch factor $\lambda>n$.
 
\end{teo}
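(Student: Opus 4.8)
The plan is to reduce the toral statement to the planar rigidity result proved in Section~4, using the dynamical input recorded in this section. Suppose, for contradiction, that $\langle f,h\rangle$ is a faithful $\bs(1,n)$-action on $\T^2$ with $h$ an Anosov automorphism whose stretch factor is $\lambda>n$. By Theorem~\ref{gl} there is $k\geq 1$ with $f^k$ isotopic to the identity, admitting the irrotational lift $\tilde f$ whose rotation set is $\{(0,0)\}$, and with $\fix(f^k)\neq\emptyset$; moreover $\langle f^k,h\rangle\cong\bs(1,n)$ is still faithful, so after replacing $f$ by $f^k$ we may assume $f$ itself is isotopic to the identity with irrotational lift $\tilde f$. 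By Lemma~\ref{up}, for any lift $\tilde h$ of $h$ the equation $\tilde h\tilde f\tilde h^{-1}=\tilde f^n$ holds on $\R^2$.

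Next I would check that $\tilde f$ has bounded displacement and that $\tilde h$ is (conjugate to) a linear diagonalizable map with eigenvalues $\lambda>n$ and $\mu=1/\lambda<1$. The bounded displacement of $\tilde f$ is automatic: $\tilde f$ is a lift of a torus homeomorphism isotopic to the identity, so $\tilde f-\operatorname{id}$ is $\Z^2$-periodic, hence bounded (this is exactly the situation flagged in the introduction). For $\tilde h$: an Anosov automorphism of $\T^2$ is induced by a hyperbolic matrix $M\in GL_2(\Z)$ with eigenvalues $\lambda$ and $1/\lambda$ (real, since $\det M=\pm1$ and the stretch factor is $\lambda$), and any lift of $h$ differs from the linear map $x\mapsto Mx$ by a translation. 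Diagonalizing $M$ over $\R$ by a linear change of coordinates $A$, we get $AMA^{-1}=\operatorname{diag}(\lambda,\mu)$ with $\mu=\pm 1/\lambda$; since $|\mu|<1<n<\lambda$ we are in case~2 of the table (one eigenvalue strictly greater than $n$, the other strictly less than $1$ in absolute value). The one subtlety is that a lift of $h$ is the affine map $x\mapsto Mx+v$ rather than the linear map $Mx$; but an affine hyperbolic map is linearly conjugate to its linear part via translation by its fixed point $(\operatorname{id}-M)^{-1}v$, and this conjugation does not affect the bounded-displacement property of $\tilde f$ (it only translates coordinates), so we may assume $\tilde h$ is honestly linear and diagonal.

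With these reductions in place, the planar data $(\tilde f,\tilde h)$ satisfies exactly the hypotheses of Lemma~\ref{lem 3,1}: $\tilde h$ is linear diagonal with one eigenvalue $>n$ and the other $<1$, and $\tilde f$ has bounded displacement. (If the diagonalized $\mu$ is negative, I would pass to $\tilde h^2$, using Lemma~\ref{bsp}(1), which gives $\tilde h^2\tilde f\tilde h^{-2}=\tilde f^{n^2}$ with eigenvalues $\lambda^2>n^2$ and $\mu^2=1/\lambda^2<1$, so the same lemma applies with $n$ replaced by $n^2$.) Hence $\tilde f=\operatorname{id}$, so $f=\operatorname{id}$ on $\T^2$, contradicting faithfulness of $\langle f,h\rangle$ (equivalently, $f$ had infinite order in $\bs(1,n)$). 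This is the desired contradiction.

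The main obstacle I anticipate is bookkeeping around the orientation/sign issues: ensuring that the eigenvalue $\mu$ of the diagonalized conjugating matrix can be taken positive (or handled by squaring), that the Anosov map is \emph{orientation preserving} as required throughout Section~4 — which again may force passing to $h^2$ — and that replacing $f$ by $f^k$ and $h$ by a power keeps the action faithful, which holds because any finite-index-type replacement of the generators of $\bs(1,n)$ of the form $\langle b^k, a\rangle$ or $\langle b, a^j\rangle$ still generates a group isomorphic to $\bs(1,n')$ for a suitable $n'$, as already used for Theorem~\ref{gl}. None of these is deep, but they need to be stated carefully so that the citation of Lemma~\ref{lem 3,1} is legitimate.
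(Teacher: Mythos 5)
Your proof is correct and follows essentially the same route as the paper: reduce via Theorem~\ref{gl} and Lemma~\ref{up} to a lifted planar action with bounded displacement and a hyperbolic linear conjugating map, then apply Lemma~\ref{lem 3,1}. The paper states this in one line; your version simply fills in the details (diagonalization, affine versus linear lifts, sign and orientation issues handled by passing to $h^2$) that the paper leaves implicit.
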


\begin{proof} Replacing $f$ by some power of it if necessary, we can lift the action to the plane using Lemma \ref{up}. Since $f$ is isotopic to the identity, its irrotational lift $\tilde f$ has bounded displacement, as it is obtained by litfing the homotopy from $\id_{\T^2}$, beginning at $\id_{\R^2}$. On the other hand, since $h$ is an Anosov map, we can choose $\tilde h$ as the lift of $h$ that is a linear map, and its canonical form  is  $$D=\left(\begin{array}{cc} \lambda & 0 \\ 0 & \lambda^{-1}\end{array}\right) $$ where $\lambda$ is the stretch factor of $h$. As we assumed that $\lambda>n$, and clearly $\lambda^{-1}<1$, we can apply Lemma \ref{lem 3,1} (up to conjugation), to deduce that $\tilde f=\id$. Thus the result follows. 
 
\end{proof}

From our work on the parabolic case, namely Theorem \ref{par fijo}, we can get the analog of Theorem \ref{no A} for Dehn twist maps:

\begin{teo} There is no faithful action $\varphi$ of $BS(1,n)$ by toral homeomorphisms so that $\varphi(a)$ is a Dehn twist map.
 
\end{teo}

\begin{proof} We consider $f,h: \R ^2\to \R ^2$ a lift of the action $\varphi$ given by Lemma \ref{up}, possibly after replacing $\varphi(b)$ by some power of it. Since $\varphi(b)$ is isotopic to the identity, the map $f$ commutes with translations by vectors in $\Z^2$. By definition of Dehn twist map, we can take $h$ as a linear map which, after conjugation in $SL(2,\Z)$, has an associated matrix of the form 
$$ P^N=\left(\begin{array}{cc} 1 & 0 \\ N & 1 \end{array}\right)$$ for $N\in \Z$, $N\neq 0$. Note that conjugation by an element of $SL(2,\Z)$ induces a conjugation of $\varphi$ in $Homeo(\T^2)$, thus we may assume that $$h(x,y)=(x,y+Nx)$$ while preserving the $\Z^2$-equivariance of $f$. 

First we note that conjugating by $D=1/N \id$ we get in the hypotheses of  Theorem \ref{par fijo}, and applying it directly we would deduce that $\varphi(b)$ fixes the points in the meridian circle $S_0=\{\overline{0}\}\times\R/\Z$. Our strategy will be to show the same for all the rational meridian circles $S_{m/l}=\{\overline{m/l}\}\times\R/\Z$ for $m,l\in\N$, thus obtaining that $\varphi(b)=\id$ by continuity. 

Let $m,l\in\N$ and consider the vertical line $L_{m/l}=\{m/l\}\times \R$. As in \S \ref{s:rig par}, we let $L=L_0$ be the vertical axis. Note that $$h^l(m/l,y)=(m/l,y+mN)=(m/l,y)+(0,mN)$$ thus the map  $g(x,y)=h^l(x,y)-(0,mN)$ fixes all the points in $L_{m/l}$. We show that $\langle f,g \rangle$ is an action of $BS(1,n^l)$:
\begin{eqnarray*}
gfg^{-1}(x,y)
&=& gf(h^{-l}(x,y)+(0,mN))   \\
&=& h^l(f h^{-l}(x,y)+(0,mN))-(0,mN) \\
&=&  h^lf h^{-l}(x,y)+h^l(0,mN)-(0,mN)\\
&=& f^{n^l}(x,y)
\end{eqnarray*}
since $f$ commutes with integer translations and $h$ is linear with $\fix(h)=L$. On the other hand let $\phi(x,y)=(x+m/l,y)$, i.e. the translation by $(m/l,0)$. Then we clearly have $\phi(L)=L_{m/l}$ and  
\begin{eqnarray*}
\phi h^l \phi^{-1}(x,y) &=& \phi h^l(x-m/l,y)    \\
&=&  \phi(x-m/l,y+lNx-mN) = (x,y+lNx-mN) \\
&=&    h^l(x,y)-(0,mN) =g(x,y) \\ 
\end{eqnarray*}
so setting $f_1=\phi^{-1}f\phi$ we get that $\langle f_1, h^l \rangle$ is the action of $BS(1,n^l)$ that we get by conjugating $\langle f,g\rangle$ by $\phi^{-1}$. Then we apply Theorem \ref{par fijo} to $\langle f_1, h^l \rangle$, (after conjugation by $1/lN \id$), to get that $L\subset\fix(f_1)$. We deduce that $f=\phi f_1 \phi^{-1}$ fixes all points in $\phi(L)=L_{m/l}$ as desired, thus finishing the proof.

\end{proof}

\end{section}

\end{document}